\documentclass[11pt, letter]{article}
\usepackage[letterpaper,left=1.in, right=1.in, top = 1.truein, bottom = 1.truein]{geometry}

\usepackage{eulervm}
\usepackage{tgpagella}
\usepackage[T1]{fontenc}
\usepackage{amsmath}
\usepackage{amssymb}
\usepackage{amsthm}
\usepackage{cite}
\usepackage{color}
\usepackage{graphicx}
\usepackage{caption}
\usepackage{color}
\usepackage{bm}
\usepackage{mathtools}


\DeclareMathAlphabet{\mathpzc}{OT1}{pzc}{m}{it}

\DeclareMathOperator{\rank}{rk}

\usepackage{algorithm}
\usepackage{algpseudocode}
\makeatletter \def\BState{\State\hskip-\ALG@thistlm} \makeatother

\usepackage{etoolbox} \makeatletter
\patchcmd{\@maketitle}{\begin{center}}{\begin{flushleft}}{}{}
\patchcmd{\@maketitle}{\begin{tabular}[t]{c}}{\begin{tabular}[t]{@{}l}}{}{}
\patchcmd{\@maketitle}{\end{center}}{\end{flushleft}}{}{}

\newcommand\be{\begin{equation}}
\newcommand\ee{\end{equation}}

\newtheorem{thm}{Theorem}[section]
\newtheorem{lem}[thm]{Lemma}
\newtheorem{prp}[thm]{Proposition}

\newtheorem{dfn}[thm]{Definition}
\newtheorem{rem}[thm]{Remark}

\newcommand{\Real}{\mathbb{R}}

\newcommand{\Z}{\mathbb{Z}}

\newcommand{\V}{\mathcal{V}}



\def\Real{\mathbb{R}}

\newcommand{\ph}{\mathbf{PH}}
\newcommand{\homol}{\mathbf{H}}

\newcommand\path{\bm{\pi}}

\DeclarePairedDelimiterX\set[1]\lbrace\rbrace{\def\suchthat{\; \delimsize\vert\;}#1}

\newcommand{\restr}[1]{|_{#1}}
\newcommand{\filt}[1]{\ensuremath{M_{\gamma(#1)}}}
\newcommand{\filtf}[1]{\ensuremath{F_{#1}}}
\newcommand{\filtg}[1]{\ensuremath{G_{#1}}}

\newcommand{\pd}{\ensuremath{PH}}

\newcommand{\grad}{\nabla}

\newcommand{\cell}[1]{\ensuremath{e^{#1}}}
\newcommand{\homotopeq}{\ensuremath{\simeq}}
\newcommand{\lifecont}[2]{\ensuremath{PG_{#1}^{#2}}}
\newcommand{\lcpiece}{\ensuremath{l}}
\newcommand{\hittime}[2]{\ensuremath{t_{#1}^{#2}}}

\newcommand{\vecl}{\ensuremath{\prec}}

\newcommand{\ra}{\ensuremath{\rightarrow}}
\newcommand{\R}{\ensuremath{\mathbb{R}}}
\newcommand{\fnbd}{\ensuremath{l}}
\newcommand{\Fnbd}{\ensuremath{L}}


\newcommand{\sing}{\bm{\Sigma}}
\newcommand{\phd}{\ph}

\usepackage{algorithm}
\usepackage[shortlabels]{enumitem}
\usepackage{algpseudocode}
\usepackage{hyperref}
\usepackage{tikz}
\usetikzlibrary{backgrounds}
\usetikzlibrary{intersections}
\usetikzlibrary{positioning}

\usepackage[margin=1cm]{caption}

\makeatletter \def\BState{\State\hskip-\ALG@thistlm} \makeatother

\usepackage{etoolbox} \makeatletter
\patchcmd{\@maketitle}{\begin{center}}{\begin{flushleft}}{}{}
\patchcmd{\@maketitle}{\begin{tabular}[t]{c}}{\begin{tabular}[t]{@{}l}}{}{}
\patchcmd{\@maketitle}{\end{center}}{\end{flushleft}}{}{}

\usepackage{graphicx}
\usepackage{subcaption}
\captionsetup{compatibility=false}

\usepackage{layouts}

\begin{document}


\title{Biparametric persistence for smooth filtrations \footnotetext{The authors were supported by the ARO through the MURI
'Science of Embodied Innovation, Learning and Control'.} } 


\author{Mishal Assif P K$^1$, Yuliy Baryshnikov$^{1,2,3}$ }
\date{%
    $^1$University of Illinois, Department of ECE\\%
    $^2$University of Illinois, Department of Mathematics\\
    $^3$Kyushu university, IMI\\[2ex]%
    \today
}

\maketitle
\abstract{The goal of this note is to define biparametric persistence diagrams for smooth generic mappings \(h=(f,g):M\to V\cong \Real^2\) for smooth compact manifold \(M\). Existing approaches to multivariate persistence are mostly centered on the workaround of absence of reasonable algebraic theories for quiver representations for lattices of rank 2 or higher, or similar artificial obstacles. We approach the problem from the Whitney theory perspective, similar to how single parameter persistence can be viewed through the lens of Morse theory.}
\section{Introduction}
\subsection{Persistence}
We recall that the ``classical'', one-parametric persistence deals with one parameter filtrations, 
an $\R \text{ or } \Z$-indexed collection of subsets $\{X_s\}_{s\in I}$ of a topological space
$X$ such that $\cup_{s \in I}X_s = X$ and
\[
    X_s \subset X_t, \quad \text{if } s \leq t.
\]
If $s \leq t$, the natural inclusions
\[
    i_{s, t}:X_s \rightarrow X_t
\]
induce linear maps
\[
    \left(i_{s, t}\right)_*: \homol_k(X_s) \rightarrow \homol_k(X_t)
    \]
    (all homology groups here are over a field), so that 
satisfying the natural consistency (``functoriality''): for $r \leq s \leq t$
\[
    i_{s, t} \circ i_{r, s} = i_{r, t}.
    \]
    
This collection of vector spaces $\{\homol_k(X_s)\}_{s \in I}$
connected by linear maps $\{ \left(i_{s, t}\right)_*\}_{s\leq t}$, is called the 
persistence module associated with the filtration.

One-parametric persistence theory asserts that upto isomorphism, this persistence
module is completely characterized by a set of points in the plane called the persistence
diagram \cite{zomo2005}. Essentially, the whole collection of homology mappings split into chains of isomorphisms, whose terminal points, - birth and death points, - carry a lot of information about the filtration.

This theory, appearing in different guises essentially since Morse work, provided an essential toolbox for many researchers dealing with filtrations, from geometric analysis to data analysis. The diagrams of birth-death pairs, known as the {\em persistence diagrams} 
are known to be stable with respect to perturbations in a certain sense \cite{cohen2007}, and can be computed algorithmically \cite{harer2010}.

Biparametric persistence deals with the filtrations indexed by points in the plane, $\R^2 \text{ or } \Z^2$ endowed with the usual product order relation.

\subsubsection{Slices and Bipersistence}
In the smooth category, the following definition is natural.

Let $h = (f, g):M \rightarrow \V\cong \Real^2$ be a map from the topological space
$M$ to the plane.
We define the {\em $c=(a,b)$-slice} of a manifold as the set
\[
M_c=M_{a,b}=\{f\leq a, g\leq b\}.
\]
We also denote the sublevel sets of $f$ and $g$ as
\[
    F_a = \{f \leq a \}, \quad G_b = \{g \leq b\}.
\]
We say $c_1 = (a_1, b_1) \prec c_2 = (a_2, b_2)$ when
\[
    a_1 \leq a_2, \quad b_1 \leq b_2.
\]
The sublevel sets $\{F_a\}_{a \in \Real}$ form a one-parameter filtration of $M$,
while the slices $\{M_c\}_{c \in \Real^2}$ give rise to a biparametric filtration
on $M$. The persistence module associated with such bi-filtrations are the central object of our study.

\subsection{Context: Biparametric Persistence}
The goal of this note is to define the biparametric persistence diagrams for smooth generic mappings \(h=(f,g):M\to V\cong \Real^2\) for smooth compact manifold \(M\).

Existing approaches to multivariate persistence are mostly centered on the workaround of absence of reasonable algebraic theories for quiver representations for lattices of rank 2 or higher, or similar artificial obstacles, and thus are focused on the discrete filtrations \cite{lesnick, lesnick_computing_2019,miller_data_nodate,schenck}.

An alternative thread of research dealing with the biparametric persistence relies on restricting the filtration to straight lines with a positive slope in the plane of parameters, and studying the resulting (two-parametric) family of one-parametric persistence diagram, see \cite{cerri2015necessary,cerri_multidimensional_2009,Cer19}.

There are other treatises of biparametric persistence, exploring the space adjacent to the major threads we mentioned, see e.g. \cite{vipond_multiparameter_nodate,scaramuccia2020computing}.

    Our view on the problem was driven by the parallels between the Morse theory and persistence theory: the latter is a subtle globalization of the former. The corresponding singularity theory for the biparametric persistence theory would deal with the critical points of smooth mappings into $\Real^2$, a classical topic, going back to H. Whitney \cite{Whi55}. What is the persistence theory for such mappings? This was the primary motivating question guiding this research.

    Another guiding idea was to dramatically expand the (natural) approach of describing the persistence structures on the increasing one-parametric filtrations induced from the biparametric ones. To this end we replace the family of straight lines with positive slopes deployed by Frosini and his co-authors to the functional family of arbitrary increasing curves. While the resulting family of curve is infinite dimensional, this approach allows one to keep track of all patterns of the biparametric filtrations. Moreover, the space of increasing curves can be effectively discretized, leading to a finite (for compact source manifold) cubical CAT($0$) complex \cite{sageev_ends_1995} with the one-dimensional persistence data attached to its vertices, giving a complete description of the biparametric diagram.

\subsection{Singularities of mappings into the plane}
Recall that Morse theory asserts that a generic real valued function on a compact
manifold has its critical points nondegenerate, isolated, and having different
critical values. In addition, the local form of such a function in a neighborhood of its
critical point is completely determined by a single integer, called the index, associated
with each critical point. This gives significant information about the evolution of
topology of the slice sets of such a function.

Whitney theory \cite{Whi55} parallels Morse theory for the mappings into the plane, and we will
use it to understand the evolution of the homology of the slice sets.

\begin{dfn} Singular set $\sing(h)\subset M$ of $h$ is the set of points $x\in M$  where the rank of the Jacobian $D_xh: T_xM\to\Real^2$ is less than maximal (i.e. less than $2$).

The image $h(\sing(h))$ is called the {\em visible contour} of $h$.
\end{dfn}

Whitney established that for a generic map (i.e. a map belonging to an open dense subset of smooth maps), the rank of the Jacobian does not drop to zero, and the singular set is a smooth curve in $M$. Moreover, at the singular points, an appropriate choice of coordinates in $M$ and $V$ brings $h$ to one of the two canonical forms:

\begin{enumerate}
    \item $f(x_1, ..., x_m) = x_1, \quad g(x_1, ..., x_m) = q(x_2, ..., x_m),$  
    \item $f(x_1, ..., x_m) = x_1, \quad g(x_1, ..., x_m) = x_2^3 + x_1x_2 + q(x_3, ..., x_m),$  
\end{enumerate}
where $q$ is a non-degenerate quadratic function. The points of the first kind are called the {\em fold points},
and $h$ restricted to $\sing(h)$ is an immersion at such points.

The points of the second type are isolated (and therefore, there is only finitely many of them, as $M$ is compact).
Such points are called {\em cusp points} which are characterized by the fact that the
Jacobian of $h$ restricted to $\sing(h)$ degenerates.

Therefore $h$ is an immersion of $\sing(h)$ at the fold points (so its image is a smooth curve, perhaps with self-intersections), and the cusp points map to cusps on the visible
contour.

\begin{dfn}
    The $h$-images of critical points of $f$ are called {\em vertical}, of $g$, {\em horizontal} points.
\end{dfn}

Applying the standard arguments (see, e.g. \cite{stable}) leads to the following

\begin{prp} For generic $h$ the set of critical points $\sing(h)$ is a smooth curve in $M$,
mapping to a curve with cusps and simple self-intersections in $V$,
and the component functions $f,g$ are Morse, with the critical points
distinct from the cusp points, and map to different points by $h$.

Further, the tangents to the $h$-image of the fold points is vertical or horizontal exactly at the vertical or horizontal points, the curvatures of the visible contour at the critical points of $f$ and $g$ are non-vanishing, and the the points of the self-intersection of the fold curve are neither horizontal nor vertical.
\end{prp}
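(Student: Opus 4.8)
The plan is to realize each clause as an instance of Thom's jet transversality theorem or of the multijet transversality theorem: for every condition one exhibits a locally closed (semialgebraic) stratum of a suitable jet space $J^k(M,\V)$, or of the $r$-fold multijet space over $M^{(r)}:=\{(x_1,\dots,x_r)\in M^r : x_i\neq x_j \text{ for } i\neq j\}$, whose codimension strictly exceeds the dimension of the corresponding (multi-)source, so that for generic $h$ the jet extension $j^k h$ (resp.\ its multijet) is transverse to, hence disjoint from, that stratum. Since $C^\infty(M,\V)$ with the Whitney topology is Baire and $M$ is compact, the finitely many conditions below then hold simultaneously on an open dense set. The structural backbone --- that $\sing(h)$ is a smoothly embedded $1$-manifold, that $h|_{\sing(h)}$ is an immersion off a finite cusp set, that the two local normal forms hold, and that $f$ and $g$ are Morse --- is exactly the Whitney/Thom--Mather classification of generic maps into the plane together with the genericity of Morse functions, which we quote from \cite{Whi55,stable}. (For example, Morse-ness of $f$ says that $j^2h$ misses the codimension-$(m+1)$ stratum of $J^2(M,\V)$ on which $df=0$ and the Hessian of $f$ degenerates, and $m+1>m=\dim M$.)

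The distinctness clauses are pure codimension counts. No point is critical for both $f$ and $g$ --- in particular $\rank D_x h\geq 1$ everywhere --- because $\{D_x h=0\}$ has codimension $2m>m$ in $J^1(M,\V)$. A critical point of $f$ (or of $g$) is never a cusp, because imposing $df=0$ (resp.\ $dg=0$) on the cusp stratum, of codimension $m$, is a nontrivial further constraint and raises the codimension to $m+1>m$. The critical points of $f$ and of $g$, together with the cusps, have pairwise distinct $h$-images: any coincidence $h(x_1)=h(x_2)$ between two of these distinguished points is cut out on $M^{(2)}$ by $m+m+2>2m=\dim M^{(2)}$ equations ($m$ for each of the two point-type conditions, $2$ for the coincidence), hence is avoided; and simplicity of the self-intersections of the fold curve, together with the absence of triple points, are the analogous counts with the codimension-$(m-1)$ stratum $\Sigma^1(h)=\sing(h)$ in place of the critical-point conditions (giving a $0$-dimensional, transverse double-point locus on $M^{(2)}$ and an empty triple-point locus on $M^{(3)}$).

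The clauses describing the visible contour near its distinguished points carry the real content, so I would save them for last. Write $C:=\sing(h)$ and let $\gamma$ be a local parametrization of $C$; the contour velocity is $\big(df(\dot\gamma),\,dg(\dot\gamma)\big)$, so the contour tangent is vertical exactly where $f|_C$ is critical and horizontal exactly where $g|_C$ is critical. At a critical point of $f$ (necessarily a fold point, by the above) one has $df=0$, so $f|_C$ is critical there as well; the transversality condition that $C$ is not tangent to the $g$-level set there --- a positive-codimension stratum --- gives $dg(\dot\gamma)\neq 0$, so the tangent is genuinely vertical rather than a point where immersivity fails. Non-vanishing of the curvature of the contour at a critical point of $f$ (resp.\ $g$) is a condition on the $2$-jet of $h|_C$, equivalently on $j^3h$; its failure would force $j^3h$ into a stratum of codimension $\geq m+1$, so it holds generically. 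Finally, that the self-intersection points of the fold curve are neither horizontal nor vertical is a two-fold multijet condition: to the codimension-$2m$ locus of double points of $C$ on $M^{(2)}$ one adjoins the demand that one of the two preimages lie on a vertical or horizontal fibre of $h|_C$, one further nontrivial equation, so the total codimension exceeds $2m$ and the set is empty for generic $h$.

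The hard part is this last group of clauses, which need jet (and multijet) spaces that record the second-order behaviour of $h$ along the a priori unknown curve $\sing(h)$. The cleanest route is to work over the universal singular curve --- the total space of $C$ fibred over the $\Sigma^1$-stratum of $J^1(M,\V)$ --- or, equivalently, to eliminate the normal variables via local defining equations of $\sing(h)$; one then verifies that each relevant locus (axis-parallel contour tangent away from a critical point, vanishing contour curvature at a vertical or horizontal point, double point on an axis-parallel fibre) is a genuine, coordinate-independent submanifold of the claimed codimension, so that Thom's theorems apply verbatim. Granted these jet spaces, every remaining verification is a dimension count of the kind carried out above.
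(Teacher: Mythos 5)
Your proposal is correct and is essentially the paper's own argument: the paper proves this proposition by citing the standard jet/multijet transversality machinery of Whitney theory (``applying the standard arguments, see e.g.\ \cite{stable}''), and your codimension counts are exactly those standard arguments written out, with the one remaining verification (that the curvature and double-point loci are genuine submanifolds of the stated codimension) correctly flagged rather than glossed over.
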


As the rank of the Jacobian is $1$ at the fold points, there is a unique (up to a multiple) linear combination of the differentials of $f$ and $g$ vanishing there. If the coefficients are of the same sign, we will say that the point has {\em negative slope}, otherwise, {\em positive slope}.

The nonvanishing of the curvature of the contour at the fold points (in particular, at the vertical and horizontal points) established for the generic maps implies that vertical and horizontal points split the fold curve into a finite umber of the alternating segments of positive and negative slope points.

\section{Pareto Grid}
\label{sec:sec3}

\subsection{Pareto Points and Extension Rays}
Previous applications of the Whitney theory dealt with the simultaneous optimization of several functions (important in economic theory), see e.g. \cite{smale75}, motivating the following nomenclature:
\begin{dfn}
The (closure) of contour points of negative slope is called the set of {\em Pareto points}: they form a collection of curves with cusps and self-intersections with boundaries at horizontal and vertical points.
\end{dfn}

Augment the set of Pareto points with the union of vertical and horizontal rays, attached to the (correspondingly) vertical and horizontal points, and such that the respective coordinate increases to $+\infty$ along the ray.

\begin{dfn}
We will be referring to such rays as the {\em extension rays}, and the union of Pareto segments and the extension rays in the $(f,g)$ plane as the {\em Pareto grid}.
\end{dfn}

(We borrowed the term {\em Pareto grid} from \cite{Cer19}, who introduced this notion independently in 2019.)

\begin{figure}
\centering
\begin{subfigure}{0.3\linewidth}
  \centering
  \includegraphics[width=.8\textwidth]{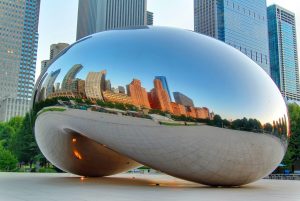}
  \caption{Chicago Millenium Park}
  \label{fig:sub1}
\end{subfigure}
\begin{subfigure}{.28\linewidth}
    \centering
  \includegraphics[width=.6\textwidth]{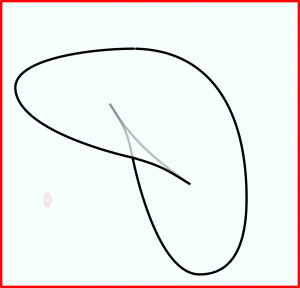}
  \caption{Visible contour}
  \label{fig:sub2}
\end{subfigure}
\begin{subfigure}{0.28\linewidth}
  \centering
\includegraphics[width=0.6\textwidth]{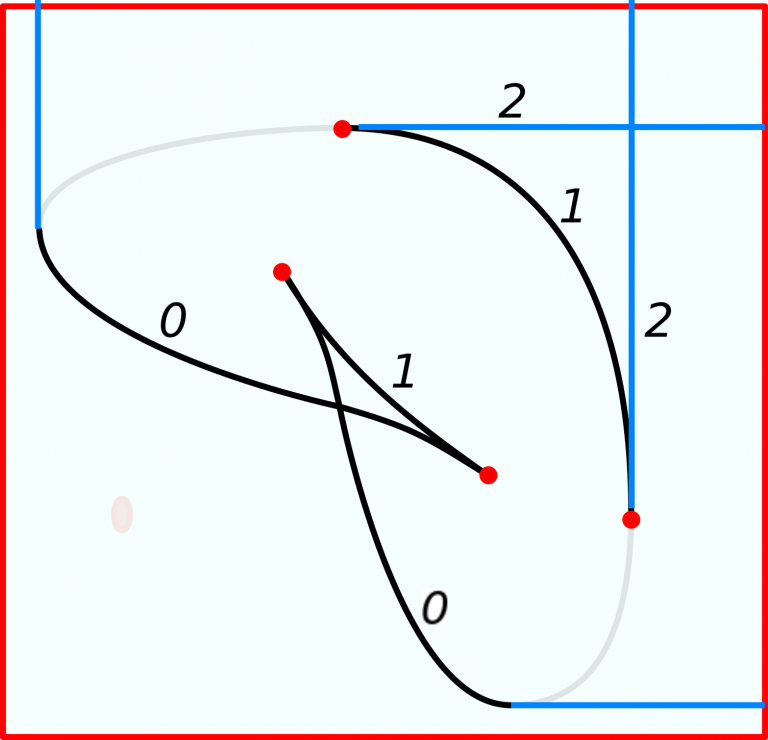}
\caption{Pareto grid} 
\label{fig:Pareto grid}
\end{subfigure}
\caption{\small Figure (a) shows a bean sculpture, the surface of which is a 2 dimensional sphere.
If $h$ is the projection of the surface onto a plane behind the bean the corresponding
visible contour is shown in Figure (b). The Pareto grid, along with cusps, pseudocusps and
indices of various segments of the grid are shown in Figure (c).}
\label{fig:contour}
\end{figure}

\begin{dfn}
    A boundary point of a Pareto segment is called a pseudocusp if the extension ray
    does not attach smoothly with the Pareto segment at the point.
\end{dfn}

An illustration of all these definitions is given in Figure \ref{fig:contour}.
The Pareto grid shown on the right display has two cusps and two pseudocusps
indicated as points in red. The blue lines are the extension rays and the black curves
are the Pareto segments. There is one vertical and one horizontal point on the Pareto
grid which are not pseudocusps as the extension rays at these points fit smoothly
with the Pareto segment.

\subsection{Topology of Slices}

In biparametric persistence, the Pareto grid plays a role analogous to set of critical values of a Morse function in one-parametric persistence theory. Outside of it, the topology of the slices does not change, and across it, it changes in a controllable way.

\begin{thm}
    \label{th:homotopic slice}
  The slices $M_c$ are homeomorphic as $c$ varies within an (open) connected component of the complement to the Pareto grid.
\end{thm}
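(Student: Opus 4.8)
The plan is to bundle the slices together. Set
\[
  W=\{(x,c)\in M\times U \ :\ f(x)\le a,\ g(x)\le b\},\qquad c=(a,b),
\]
and let $\pi\colon W\to U$, $\pi(x,c)=c$, so that $\pi^{-1}(c)=M_c$. It suffices to show that $\pi$ is locally trivial over $U$ as a map of topological spaces: a locally trivial map over a connected base has all fibres homeomorphic, which is the claim. Local triviality near a given $c_0=(a_0,b_0)\in U$ is a local assertion on $M$ — one must cover $M$ by open sets over each of which $W$, restricted to a small neighbourhood $N$ of $c_0$, is a product over $N$ — and the hypothesis $c_0\notin$ Pareto grid is exactly what makes one of a short list of normal forms available at every $x_0\in M$.

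Most points $x_0$ are harmless. If $f(x_0)>a_0$ or $g(x_0)>b_0$, then $M_c$ is empty near $x_0$ for $c$ near $c_0$; if $f(x_0)<a_0$ and $g(x_0)<b_0$, then $M_c$ is everything near $x_0$. On the open ``$f$-face'' $\{f=a_0,\,g<b_0\}$ the point $x_0$ is not a critical point of $f$ — otherwise $h(x_0)$ would be a vertical point and $c_0$ would lie on its vertical extension ray — so, in coordinates with $f=a_0+x_1$ and with $g<b_0$ maintained near $x_0$, $M_c=\{x_1\le a-a_0\}$ locally, trivialised by translating $x_1$; the ``$g$-face'' is mirror-symmetric. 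At a corner point $x_0\in h^{-1}(c_0)$, the hypothesis forces $x_0$ to be either a regular point of $h$ (then coordinates with $f=a_0+x_1$, $g=b_0+x_2$ give $M_c=\{x_1\le a-a_0,\ x_2\le b-b_0\}$, again a translation) or a fold or cusp point of \emph{positive} slope — a negative-slope fold point would make $c_0$ a Pareto point, and vertical, horizontal, or negative-slope cusp points would put $c_0$ on the grid. (This uses the genericity package of the Proposition above: self-intersections of the fold curve are neither horizontal nor vertical, cusps are disjoint from the critical points of $f$ and $g$, and so on.)

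The case with real content is the positive-slope fold or cusp point $x_0\in h^{-1}(c_0)\cap\sing(h)$, where $df_{x_0}$ and $dg_{x_0}$ are \emph{positively} proportional, say $dg_{x_0}=\lambda\,df_{x_0}$ with $\lambda>0$. Writing the Whitney normal form for $h$ in coordinates $x_1,\dots,x_m$ with $f=a_0+x_1$, the $g\le b$ constraint, near $x_0$, can be solved for $x_1$ — the coefficient of $x_1$ in $g-b$ is positive near $x_0$ because $\lambda>0$ — and one finds
\[
  M_c=\{\,x_1\le\mu(x_2,\dots,x_m;\,c)\,\}
\]
locally, a subgraph in the $x_1$-direction, where $\mu$ is the minimum of $a-a_0$ and a strictly increasing function of $b-b_0-Q(x_2,\dots,x_m)$ (with $Q$ the attached nondegenerate quadratic, and, at a cusp, an extra cubic degeneracy confined to the $x_2,\dots,x_m$-directions); $\mu$ is jointly continuous in $(x_2,\dots,x_m)$ and $c$. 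A subgraph is carried homeomorphically onto the half-space $\{x_1\le 0\}$ by $(x_1,x_2,\dots,x_m)\mapsto(x_1-\mu(x_2,\dots,x_m;c),\,x_2,\dots,x_m)$, continuously in $c$, which is the local trivialisation; that the slice behaves this way rather than pinching off — as it does at a \emph{negative}-slope fold — is precisely the effect of $\lambda>0$, and is the reason the positive-slope part of the visible contour, though absent from the Pareto grid, leaves the slices unchanged. Patching the local trivialisations over a finite subcover (by a partition-of-unity argument as in Ehresmann's theorem) makes $\pi$ a fibre bundle over the connected $U$, whence all $M_c$ are homeomorphic.

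I expect the main obstacle to lie in this positive-slope corner case: writing Whitney's normal forms for a fold and a cusp \emph{in coordinates adapted to the distinguished function $f$} — rather than in the free source/target coordinates of the classical statement — and then checking the uniform-in-$c$ subgraph structure of $M_c$. This is where positivity of the slope is used in an essential way. A secondary point that also needs attention is the patching step: since the subgraph charts at positive-slope folds are only homeomorphisms (not diffeomorphisms), the resulting object is a topological, not smooth, fibre bundle — which is consistent with, and the reason for, the theorem asserting merely homeomorphy; but the partition-of-unity patching must be carried out with enough care to respect these continuous charts.
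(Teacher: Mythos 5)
Your local analysis is sound, and it is a genuinely different route from the paper's: the paper reduces to changing one coordinate at a time and appeals to collaring theorems for the smooth hypersurfaces $\{f=a\}$ and $\{g=b\}$ intersecting transversally near $M_c$, whereas you set up the total space $W\to U$ and classify local models of the pair of constraints at every $x_0$. Your treatment of the one case with real content — a corner point $x_0\in h^{-1}(c_0)$ on the positive-slope part of the contour, where $df$ and $dg$ are \emph{positively} proportional, so $\{f=a_0\}$ and $\{g=b_0\}$ are tangent and the transversality/collaring picture does not apply — is correct and in fact more explicit than the paper's argument there: since $\partial g/\partial x_1>0$ in $f$-adapted coordinates, the implicit function theorem alone (no Whitney normal form needed) exhibits $M_c$ locally as the subgraph $\{x_1\le\mu(x';c)\}$ with $\mu$ jointly continuous, and the shear by $\mu$ trivializes the family locally. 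The exclusions in the easy cases (critical points of $f$ on the $f$-face forcing $c_0$ onto an extension ray, negative-slope folds forcing $c_0$ onto a Pareto segment, etc.) are also right.

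The genuine gap is the globalization. Local product charts for $W$ over a neighborhood $N$ of $c_0$ do not assemble into a trivialization of $\pi$ ``by a partition-of-unity argument as in Ehresmann's theorem'': Ehresmann's proof glues \emph{infinitesimal} data (lifts of base vector fields), not charts, and one cannot average homeomorphisms with a partition of unity. Moreover the smooth gluing mechanism is unavailable here in principle: at a positive-slope fold your trivialization is the shear by $\mu=\min(a-a_0,\psi)$, which is only continuous/piecewise smooth, and the family cannot be smoothly trivial there because the corner structure of $M_c$ changes as $c$ crosses the positive-slope contour (equivalently, no ambient vector field tangent to $\{g=b\}$ can satisfy $df(\xi)\neq 0$ at such a point). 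So as written the last step fails. To close it you need a topological gluing mechanism — e.g. controlled (Thom--Mather type) vector fields compatible with your local subgraph structures, integrated to a stratified-isotopy trivialization — or else revert to the paper's reduction to one coordinate at a time and invoke Morse theory for manifolds with boundary, where the positive-slope crossing is exactly the ``gradient points outward'' case in which sublevel sets do not change; either fix completes an otherwise solid argument.
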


Crossing the Pareto grid leads to easy to interpret changes in the topology of the slices.

\begin{thm}
    \label{th:attach slice}
  For each pair of ordered points in $V$, such that a path connecting them intersects
  the Pareto grid transversally at a single point which is not a cusp or a pseudo-cusp, the higher slice is homotopy equivalent
  to the lower one with a single cell of dimension $k$ attached by its boundary.
\end{thm}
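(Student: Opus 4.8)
\textbf{Reduction to a sublevel filtration.}
By Theorem~\ref{th:homotopic slice} we may replace $c_1\prec c_2$ by any ordered pair lying in the same two connected components of $V$ minus the Pareto grid; taking them close to the crossing point $p$ and straightening the connecting path near $p$, we may assume the path $\gamma(t)=(a(t),b(t))$, $t\in[0,1]$, is strictly increasing in both coordinates, with $\gamma(1/2)=p$ and $\gamma(t)$ off the Pareto grid for all $t\neq 1/2$. Since $a$ and $b$ are strictly increasing, $\phi:=a^{-1}\circ f$ and $\psi:=b^{-1}\circ g$ are orientation-preserving reparametrisations of $f$ and $g$ — in particular they have the same critical points and Morse indices — and
\(
M_{\gamma(t)}=\{f\le a(t)\}\cap\{g\le b(t)\}=\{\phi\le t\}\cap\{\psi\le t\}=\{F\le t\},
\)
where $F:=\max(\phi,\psi)$. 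So it suffices to prove that $F$ has exactly one critical value in $(0,1)$, namely $t=1/2$, and that crossing it attaches a single cell.

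\textbf{Critical points of $F$ and the Pareto grid.}
A point is non-critical for $F=\max(\phi,\psi)$ precisely when some tangent vector strictly decreases every active branch; hence the critical set of $F$ is the union of (i) critical points of $\phi$ lying in $\{\phi>\psi\}$, (ii) critical points of $\psi$ lying in $\{\psi>\phi\}$, and (iii) points where $\phi=\psi$ and $d\phi,d\psi$ are oppositely directed (one of them possibly zero). As $d\phi$ and $d\psi$ are positive multiples of $df$ and $dg$, case (iii) says exactly that $df$ and $dg$ are negatively proportional, i.e. that the point is a negative-slope fold point, or — in the degenerate sub-cases — a vertical or horizontal point. Tracking values, $x$ is such a critical point with $F(x)\in(0,1)$ iff the path $\gamma$ meets the piece of the Pareto grid through $h(x)$: an extension ray at a vertical/horizontal point, or a Pareto segment at a fold point. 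Since $\gamma$ meets the grid only at $p$ and not at a cusp or pseudo-cusp, the critical point $x_0$ responsible for the crossing (a critical point of $f$ or $g$ if $p$ lies on an extension ray, a fold point of $h$ if $p$ lies on a Pareto segment) is the unique critical point of $F$ with value in $(0,1)$, and $F(x_0)=1/2$.

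\textbf{The local model.}
If $p$ lies on an extension ray, one of $\phi,\psi$ strictly dominates near $x_0$, so $F$ agrees locally with an honest Morse function and classical Morse theory attaches a cell of dimension equal to the Morse index of $f$ (resp. $g$) at $x_0$. If $p$ lies on a Pareto segment, $x_0$ is a negative-slope fold point; using $z:=\phi-\tfrac12$ as a first coordinate and applying the parametrised Morse lemma to $\psi$ along the levels of $\phi$ — nondegenerate at $x_0$, of some index $r$, by the genericity established in the Proposition — we obtain coordinates $(z,y')$ near $x_0$ with
\(
\phi=\tfrac12+z,\qquad \psi=\tfrac12+e(z)+Q(y'),
\)
where $Q$ is a nondegenerate quadratic form of index $r$, $e(0)=0$, and $e'(0)=-\lambda<0$ (the negative-slope condition). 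On a small ball $W$ around $x_0$ one then has $\{F\le\tfrac12+\sigma\}\cap W=\{z\le\sigma,\ e(z)+Q(y')\le\sigma\}\cap W$, and since $e$ is decreasing near $0$, pushing the $z$-coordinate up to $\sigma$ retracts this set onto $\{Q(y')\le\sigma-e(\sigma)\}\cap W$. For $\sigma<0$ the right-hand constant is negative, so this is a thickened $(r-1)$-sphere sitting in the negative subspace of $Q$; for $\sigma>0$ the constant is nonnegative, so the set is star-shaped about $x_0$, hence contractible. Thus crossing $t=1/2$ turns a set homotopy equivalent to $S^{r-1}$ into a contractible one — equivalently, it attaches an $r$-cell along its entire boundary sphere (with $r=0$ meaning a fresh component is created). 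A transversal crossing at a smooth vertical or horizontal point is handled by the same kind of local computation, the resulting cell dimension being forced to agree by the local connectedness of the complement of the grid.

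\textbf{Globalisation.}
Away from $x_0$, $F$ has no critical value in $[\tfrac12-\varepsilon,\tfrac12+\varepsilon]$, so a gradient-like vector field for the piecewise-smooth function $F$ — one simultaneously decreasing each active branch — produces a deformation retraction of $\{F\le\tfrac12+\varepsilon\}$ onto $\{F\le\tfrac12-\varepsilon\}$ with the local handle of the previous step glued on, and that handle retracts onto the $r$-cell. Since $F$ has no critical value in $(0,\tfrac12-\varepsilon]$ or in $[\tfrac12+\varepsilon,1)$, the inclusions $M_{c_1}=\{F\le0\}\hookrightarrow\{F\le\tfrac12-\varepsilon\}$ and $\{F\le\tfrac12+\varepsilon\}\hookrightarrow\{F\le1\}=M_{c_2}$ are homotopy equivalences, so $M_{c_2}$ is homotopy equivalent to $M_{c_1}$ with a single $r$-cell attached by its boundary. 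The main obstacle is the local analysis at the fold point: producing the adapted normal form above (a parametrised Morse lemma with $\phi$ frozen as a coordinate), noting that the function modulus $e$ enters only through the sign of $e'(0)$, and making the Morse theory of the non-smooth max-type function $F$ rigorous.
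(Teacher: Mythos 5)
Your argument is sound, but it follows a genuinely different route from the paper's. The paper reduces the crossing to an axis-parallel move (fix $b$, vary $a$ across the critical value) and then invokes Morse theory for a function on the manifold with boundary $G_b=\{g\le b\}$, citing the established literature and writing down only the brief local model $\{x_1\ge 0,\ x_1\le \phi(x_2,\ldots,x_m)+\alpha\}$; the two cases (interior critical point of $f$ for an extension ray, boundary critical point of $f\restr{\{g=b\}}$ for a Pareto segment) are exactly your two cases, so the case analysis agrees. You instead monotonize the whole crossing at once: reparametrize along a strictly increasing path so that the slices become the sublevel sets of the single max-type function $F=\max(\phi,\psi)$, identify the generalized critical points of $F$ with the path's intersections with the Pareto grid (a nice observation that makes the role of the grid transparent and essentially reproves, in passing, why the topology is constant off the grid), and handle the fold case by a parametrized Morse lemma normal form $\psi=\tfrac12+e(z)+Q(y')$ with $e'(0)<0$, which correctly exhibits the local change as coning off an $S^{r-1}$, i.e.\ an $r$-cell attachment; the index you obtain (index of $g$ restricted to a level of $f$) matches the paper's. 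What your route buys is a single coordinate-free filtration and an explicit local computation in place of a citation; what it costs is the need to make Morse theory for the nonsmooth function $F$ rigorous (gradient-like fields and the deformation lemma for max-functions), which you flag honestly -- this is a real but standard technical debt, comparable in weight to the paper's reliance on boundary Morse theory. Two shared implicit assumptions are worth stating in either write-up: the crossing point must be a simple point of the grid (not a self-intersection of the fold image, where two critical points of $F$ would share the value $1/2$ and two cells would be attached), and the degenerate situation where the path passes exactly through a vertical or horizontal point, which you dispatch with a one-line remark, deserves either exclusion by hypothesis or the same local analysis carried out at the boundary of the Pareto segment.
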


\begin{proof}[Proof of Theorem \ref{th:homotopic slice}]
  We need to prove that $M_c$ is homeomorphic to $M_{c'}$ for $c'$ close enough to $c$. It is clearly enough to prove that for $c'$ differing from $c$ just in one coordinate: say, $c=(a,b); c'=(a',b)$. By definition, the hypersurfaces $\{f=a\}$ and $\{g=b\}$ in $M$ are smooth, and intersect transversally near the slice $M_c$. Now, the result follows immediately from collaring theorems for the manifolds with boundaries and submanifolds intersecting them transversally (see, e.g. \cite{hirsch2012}).
\end{proof}

\begin{proof}[Proof of Theorem \ref{th:attach slice}]
  Let $c$ be a point of the Pareto grid which is not a (pseudo)-cusp. As in the proof of Theorem \ref{th:attach slice}, it is enough to consider a pair of points close to $c=(a,b)$ that differ from $c$ only in one coordinate, say $a'<a<a''$. In this case we consider the  change in topology of the sublevel set of a Morse function $f$ on the manifold with boundary given by $G_b=\{g\leq b\}$. This is, of course, a well-established subject, see e.g. \cite{arnold1976,goresky}.

  We need to distinguish two cases:

  \begin{itemize}
  \item If $a$ is the critical value of $f$ corresponding to a critical point $x$ in the interior of $G_b$, the point $c$ is located on the extension ray. In this case, the claim follows immediately, as by the assumptions, $f$ is a Morse function, and all local changes of the topology near $x$ amount to attaching a cell of the dimension equal to the index of $f$ at $x$. (Outside of some vicinity of $x$, the collaring theorem applies again.)
  \item If $a$ is the critical value of the restriction of $f$ to the boundary $\{g=b\}$ of $G_b$, then, again by genericity, the corresponding critical point $x$ is Morse, and we are locally dealing with, in an appropriate chart with the with the change of topology of the sets
    \[
\bm{M}_\alpha:=x_1\geq 0; x_1\leq \phi(x_2,\ldots,x_m)+\alpha
\]
as $\alpha$ varies across $0$, and $\phi$ is a Morse function (of the same index $k$ as the restriction of $f$ to $G_b$ at $x$). It is immediate, that locally, the change of topology amounts to addition of a cell of index $k$.
  \end{itemize}
  This proves the result.
\end{proof}

\subsection{Indices on the Grid}
The results of the previous section allow us to attach indices to the component curves of the Pareto grid.

\begin{dfn}We will refer to the dimension $k$ of the cell attached when crossing a point on
  the Pareto grid as the {\em index} of the point.
\end{dfn}

It is equal, to remind, to the index of $f$ or $g$ at the critical point corresponding to an extension ray, and to the index of the restriction of one of the functions to the level set of the other, for the Pareto points of the grid.

It is known that attaching a $k$-cell to a space has the effect of either increasing
the dimension of $\homol_k$ by one or decreasing that of $\homol_{k-1}$ by one.
This can be phrased algebraically as 
\begin{enumerate}
    \item $(i_{\gamma(a), \gamma(b)})_*: \homol_k\left(M_{\gamma(a)}\right) \rightarrow \homol_k\left(M_{\gamma(b)}\right)$ 
        is injective with cokernel of dimension 1,
    \item $(i_{\gamma(a), \gamma(b)})_*: \homol_{k-1}\left(M_{\gamma(a)}\right) \rightarrow \homol_{k-1}\left(M_{\gamma(b)}\right)$ 
        is surjective with kernel of dimension 1.
\end{enumerate}

This implies, inter alia, that the index is constant along the segments of the Pareto grid outside of cusps or pseudo-cusps:
\begin{prp}
    \label{p:ind def}
  The dimensions of the attached cells are constant along the smooth immersed components of the
  Pareto grid, the complements to the (pseudo)cusps.
\end{prp}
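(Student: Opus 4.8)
The plan is to prove that the index is \emph{locally} constant along each smooth immersed component $C$ of the Pareto grid, away from its cusps and pseudocusps; connectedness of the components then yields the statement. Fix such a $C$ and a point $p\in C$ which is not a cusp or pseudocusp, and first record the elementary homological fact underlying the definition of the index. If $X\subseteq Y$ and $Y\homotopeq X\cup\cell{k}$ (one cell attached along its boundary), then from the long exact sequence of the pair and $\homol_*(Y,X)\cong\widetilde\homol_*(S^k)$ the inclusion-induced map $\homol_*(X)\to\homol_*(Y)$ is an isomorphism in every degree except one, where it is \emph{either} injective with one-dimensional cokernel in degree $k$ \emph{or} surjective with one-dimensional kernel in degree $k-1$; this is exactly the dichotomy (1)--(2) recalled above. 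Consequently $k$ is recovered from the isomorphism type of this map alone: since a cell is attached, there is a unique degree $d$ in which the map fails to be bijective, and $k=d$ if the map is injective there, $k=d+1$ if it is surjective there.

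The core step is a purely local two-paths construction. Around $p$ choose a neighbourhood $W\subset V$ so small that $W$ meets the grid only in a single $C^1$ arc $\Gamma\subseteq C$ containing no cusp or pseudocusp (these being isolated), and such that in suitable local coordinates $\Gamma$ is the graph of a non-increasing function over one of the axes --- with the sole exception that, if $p$ is a vertical or horizontal point, $\Gamma$ is tangent to a coordinate axis at $p$, being there the $C^1$-smooth concatenation of a Pareto arc of negative slope with the coordinate-parallel extension ray (smooth because $p$ is not a pseudocusp). In all cases $W\setminus\Gamma$ has two connected components, $W^-$ on the side where the slice is smaller and $W^+$ where it is larger, and any monotone path inside $W$ from $W^-$ to $W^+$ meets $\Gamma$ an odd number of times. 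Now, given any $p'\in\Gamma$ close to $p$, pick $c^-\in W^-$ and $c^+\in W^+$ with $c^-\prec c^+$ and two monotone, axis-parallel ``staircase'' paths $\alpha,\beta$ from $c^-$ to $c^+$ inside $W$ such that $\alpha$ meets $\Gamma$ transversally in exactly one point, near $p$, and $\beta$ likewise near $p'$: concretely, the two $L$-shaped paths around a small coordinate box straddling the subarc of $\Gamma$ between $p$ and $p'$ (near a vertical or horizontal point one path crosses the curved Pareto piece and the other the straight ray piece, and the tangency of $\Gamma$ to the axis is harmless, since the relevant first derivative vanishes, so the ``vertical'' leg of a staircase can be made to miss $\Gamma$).

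With the paths in hand the conclusion is immediate. Splitting $\alpha$ at its crossing yields inclusions $M_{c^-}\hookrightarrow M_{c^-_\alpha}\hookrightarrow M_{c^+_\alpha}\hookrightarrow M_{c^+}$ in which the first and the last stay inside $W^-$ and $W^+$ respectively, hence are homotopy equivalences by (the proof of) Theorem~\ref{th:homotopic slice}, while the middle one is a single transversal crossing at a non-(pseudo)cusp point, hence, by Theorem~\ref{th:attach slice}, attaches a cell of dimension equal to the index $k(p)$ of $p$. Therefore the inclusion-induced map $\homol_*(M_{c^-})\to\homol_*(M_{c^+})$ has the ``cell-attachment signature'' of dimension $k(p)$; the same reasoning along $\beta$ gives it the signature of dimension $k(p')$. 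Being one and the same map, the first paragraph forces $k(p)=k(p')$. As $p'$ ranges over a whole grid-neighbourhood of $p$, the index is locally constant on $C$ minus its cusps and pseudocusps, hence constant on each connected component.

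The main obstacle is the geometric bookkeeping in the second paragraph: checking that the staircase paths can always be arranged to cross $\Gamma$ exactly once each, transversally, near the prescribed points and nowhere else in $W$ --- most delicately at a vertical or horizontal point, where $\Gamma$ is tangent to a coordinate axis and one path crosses the curved branch while the other crosses the straight one. One further configuration needs a separate (routine) remark: when $p$ is also a transverse self-intersection of the fold curve, every monotone path from $W^-$ to $W^+$ must cross \emph{both} local branches, so one instead compares the (still path-independent) unordered pair of indices recorded by $\homol_*(M_{c^-})\to\homol_*(M_{c^+})$, arranging the two paths to share their crossing with the transverse branch; alternatively, one observes that along $C$ the relevant critical point in $M$ (of $f$, of $g$, or of the restriction of one to a level set of the other) moves continuously along a curve of nondegenerate critical points, whose Morse index is then automatically constant.
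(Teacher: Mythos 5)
Your proof is correct, and its skeleton coincides with the paper's: deduce local constancy of the index from Theorem \ref{th:homotopic slice} (constancy of the slice topology on either side of a smooth arc of the grid) together with Theorem \ref{th:attach slice}, then conclude by connectedness of each component. The difference is in how local constancy is extracted. The paper argues in two sentences: since the slices on each side of the smooth arc are all homeomorphic, the change of topology --- hence the dimension of the attached cell --- must be the same wherever the arc is crossed; and at double points of the grid the two modifications are localized near distinct critical points in $M$, hence independent. You instead fix a single pair $c^-\prec c^+$ and factor the one inclusion $M_{c^-}\subset M_{c^+}$ through two monotone staircase paths crossing the arc near $p$ and near $p'$, using the (correct) observation that the dimension of a single attached cell is recoverable from the induced map on homology; this is a sharper, map-level version of the same comparison (the paper's version only needs Betti numbers), and it is closer to what is actually used later for the persistence statements. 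Your treatment of self-intersections also differs: comparing unordered pairs of signatures, or noting that the Morse index is constant along the curve of nondegenerate critical points in $M$ --- the latter is a clean alternative to the paper's localization argument. The one soft spot you flag yourself, arranging the staircase crossings near a vertical or horizontal point where the extension ray joins the Pareto arc tangentially, is indeed only bookkeeping: the curvature there is nonvanishing, so a horizontal (respectively vertical) leg crosses both the ray and the nearby arc transversally, and the vertical (respectively horizontal) legs can be kept off the grid.
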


\begin{proof}
  Consider first the smooth components of the Pareto grid away from the cusps or pseudocusps. We know (by Theorem  \ref{th:homotopic slice}
  that the topology is constant on either side of that smooth curve. Hence, the changes of the topology should be the same wherever on crosses it.

  The situation near the intersecting components of the Pareto grid is also clear: the changes of the topology caused by the crossing of either of the components are localized near the corresponding critical points, which are distinct, and therefore are independent of each other. 
\end{proof}

Another implication of the Proposition \ref{p:ind def} allows us to characterize the indices of the branches of the Pareto grid connecting at a (pseudo)cusp:
the change in index of a smooth segment of the Pareto grid at a (pseudo)cusp is 
also characterized in the following result.

\begin{prp}
    \label{p:ind change}
At the (pseudo)cusps, the points of the two branches are naturally ordered: any point of one branch near the (pseudo) cusp is greater than (some of the) points of the other (we will call the former branch the {\em upper}, the latter, the {\em lower} one). The index of the upper branch exceeds the index of the lower branch by one.
\end{prp}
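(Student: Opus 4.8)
The plan is to deduce the index relation from the homological effect of crossing the two branches, using Theorems~\ref{th:homotopic slice} and~\ref{th:attach slice}, after first pinning down the local shape of the grid at the (pseudo)cusp. For an honest cusp $c$ lying on the Pareto grid, the visible contour is a semicubical cusp and, since $c$ is disjoint from the critical sets of $f$ and $g$ so the slope does not change there, both branches $\beta_1,\beta_2$ issuing from $c$ are negative-slope Pareto arcs tangent at $c$ to a common line $\ell$; parametrizing by the depth $\tau\ge 0$ into the cuspidal region gives $\beta_{1,2}(\tau)=c+\tau u\pm c_0\tau^{3/2}n+o(\tau^{3/2})$ with $u$ spanning $\ell$, $n\perp u$, $c_0\ne 0$. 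At a pseudocusp the local model is the same in spirit: the Pareto segment is a parabolic arc tangent to a coordinate axis with non-vanishing curvature (by the genericity proposition), the extension ray is the straight half-line along that axis, and the two issue from $c$ on the same side, bounding a thin cuspidal sector $S$. In either case the leading term of $\beta_2(\tau)-\beta_1(\tau)$ is a fixed nonzero vector whose two components have the same sign — for a cusp because $\ell$ has negative slope, so its normal $n$ points along a coordinate cone; for a pseudocusp because $n$ is literally a coordinate direction — so, relabelling if necessary, $\beta_2(\tau)\succeq\beta_1(\tau)$ for all small $\tau>0$, while the same expansions show that no point of $\beta_1$ near $c$ dominates any point of $\beta_2$. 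This yields the ``ordering'' half of the statement: $\beta_2$ is the upper branch, $\beta_1$ the lower, and the sector $S$ has $\beta_1$ as its lower boundary wall and $\beta_2$ as its upper wall in the product order.

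Next I would exploit that the complement of the grid near $c$ consists of $S$ and one large connected region. Choose $c^-\prec c\prec c^+$ in the complement of the grid, close to $c$; since near $c$ the grid is just $\beta_1\cup\beta_2$, a monotone path from $c^-$ to $c^+$ can be routed around the side of $c$ away from $S$, missing the grid, so by the collaring argument behind Theorem~\ref{th:homotopic slice} the inclusion $M_{c^-}\hookrightarrow M_{c^+}$ is a homotopy equivalence. Now pick instead a monotone path from $c^-$ to $c^+$ through a point $s\in S$ (then $c^-\prec s\prec c^+$ automatically). Such a path meets each branch near $c$ at most once — a negative-slope arc, or a coordinate ray, is crossed at most once by an increasing path — and, because $s$ lies above $\beta_1$ and below $\beta_2$ while $c^-$ and $c^+$ lie outside $S$, the path crosses $\beta_1$ once (entering $S$) and then $\beta_2$ once (leaving $S$), at interior points where by Proposition~\ref{p:ind def} the indices are the branch indices $k_1$ and $k_2$. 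With $M_S$ the slice over $s$, Theorem~\ref{th:attach slice} identifies the inclusions $M_{c^-}\xrightarrow{\alpha}M_S\xrightarrow{\beta}M_{c^+}$ (up to homotopy) with attaching a $k_1$-cell and then a $k_2$-cell; and $\beta\circ\alpha$ is just the inclusion $M_{c^-}\hookrightarrow M_{c^+}$, so $\beta_*\alpha_*$ is an isomorphism.

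Finally I would read off the indices. Since $\beta_*\alpha_*$ is an isomorphism, $\alpha_*$ is injective and $\beta_*$ is surjective. By the dichotomy recalled above — attaching a $k$-cell makes the $\homol_k$-map injective with $1$-dimensional cokernel, or the $\homol_{k-1}$-map surjective with $1$-dimensional kernel, and an isomorphism in all other degrees — injectivity of $\alpha_*$ rules out the second alternative for $\alpha$, so $\alpha_*$ is an isomorphism except in degree $k_1$, where it is injective with $1$-dimensional cokernel; symmetrically, surjectivity of $\beta_*$ forces $\beta_*$ to be an isomorphism except in degree $k_2-1$, where it is surjective with $1$-dimensional kernel. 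For the composite $\beta_*\alpha_*$ to be an isomorphism in every degree, the unique degree in which $\alpha_*$ fails to be an isomorphism, namely $k_1$, must coincide with the unique degree in which $\beta_*$ fails, namely $k_2-1$; hence $k_2=k_1+1$, i.e.\ the index of the upper branch exceeds that of the lower branch by one.

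The delicate part is the local geometry in the first two paragraphs: one must check, from the semicubical (resp.\ parabola-plus-ray) model, that the two branches genuinely bound a single thin sector whose complement in a small disk around $c$ has exactly one other, connected, region, and — crucially for the \emph{sign} of the index difference — that a monotone path from $c^-$ to $c^+$ through the sector enters across the lower branch and exits across the upper one; getting this orientation backwards would produce $k_2=k_1-1$. Once the geometry of the sector is in place, the homological conclusion is forced.
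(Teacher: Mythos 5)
Your proposal is correct and follows essentially the same route as the paper: compare two nearby points straddling the (pseudo)cusp, note that a grid-avoiding increasing connection makes the inclusion of slices a homology isomorphism, while a connection through the cuspidal sector attaches a cell for each branch, and then the injective/surjective dichotomy for cell attachments forces the indices to differ by one. You merely spell out the local geometry (ordering of the branches, which wall of the sector is crossed first) and the algebraic bookkeeping that the paper compresses into ``by inspection.''
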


\begin{figure}
\centering
\begin{subfigure}{0.4\linewidth}
  \centering
  \includegraphics[width=.5\linewidth]{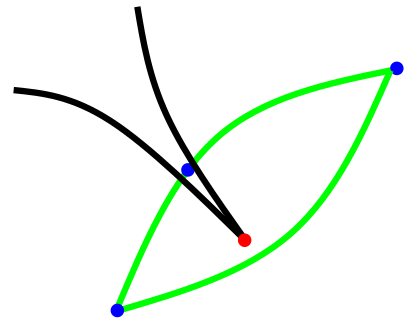}
  \caption{neighborhood of a cusp}
  \label{fig:cusp}
\end{subfigure}
\begin{subfigure}{.4\linewidth}
  \centering
  \includegraphics[width=.5\linewidth]{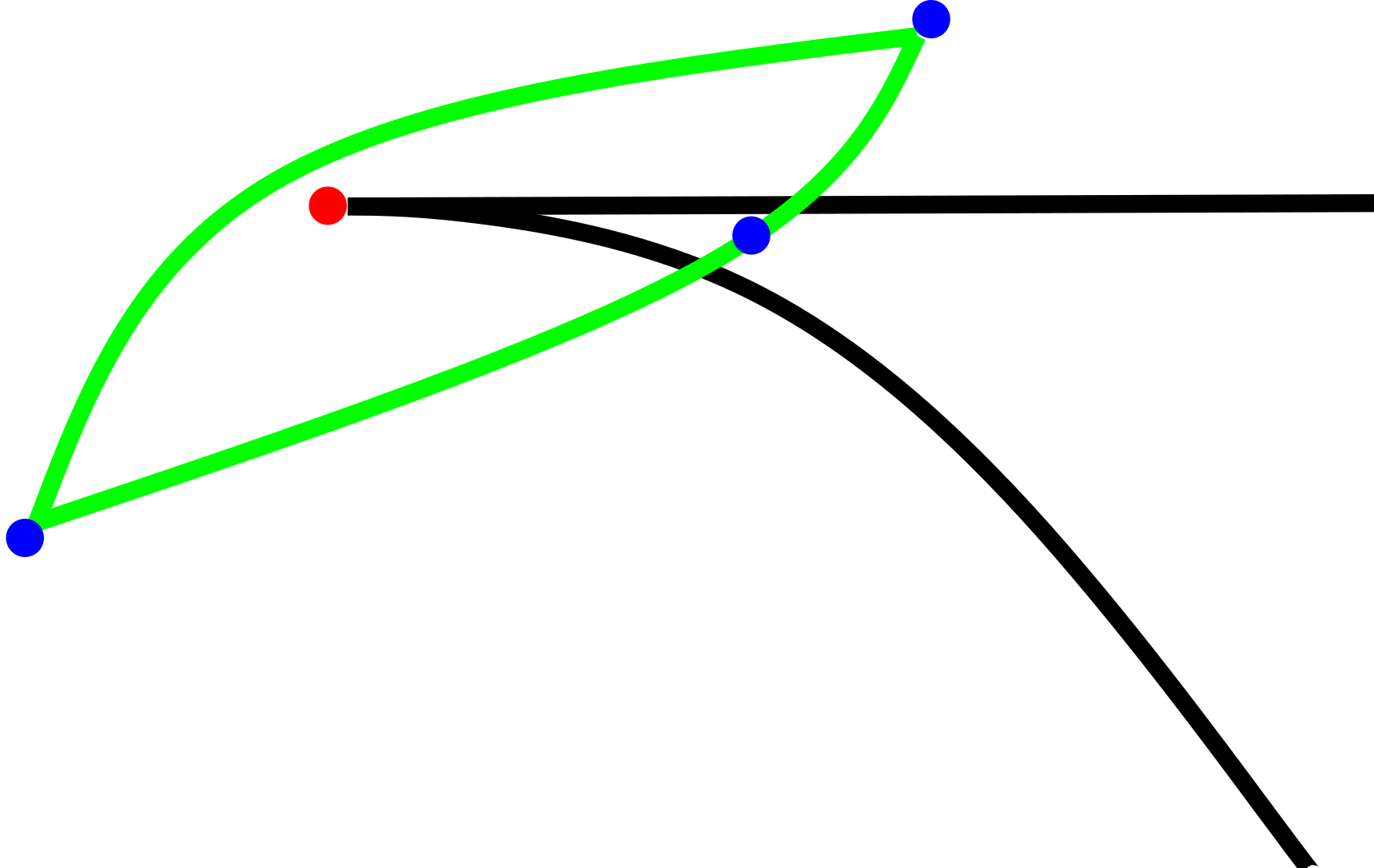}
  \caption{neighborhood of a pseudocusp}
  \label{fig:pseudocusp}
\end{subfigure}
\caption{Curves in a neighborhood of a (pseudo)cusp.}
\label{fig:pf1}
\end{figure}

\begin{proof}
  Consider a pair of points outside of the Pareto grid, one slightly above, one slightly below the cusp. As they belong to the same connected component of the complement to the Pareto grid, the corresponding slices are homeomorphic. One the other hand, one can find an increasing curve connecting these two points, intersecting the Pareto grid at two branches joined at the cusp. At each of the branches a disk of some dimension is attached to the slice, resulting in trivial change of the topology. By inspection, this is possible only if at the upper branch, the class generated by the lower one was annihilated, implying the result.
\end{proof}

\section{Increasing curves and persistence}
\label{sec:sec4}
We will be referring to the pseudocusps and cusps as {\em obstacles}.
Consider an {\em increasing curve} \(\gamma:\Real\to\Real^2\) in the \(f,g\)-plane
(this means that both functions strictly increase along the curve). To fix the
gauge, we will assume that the curve is parameterized by \(f+g\)
(we will refer to this parameter as the {\em natural height}).

An increasing curve \(\gamma\) defines a usual \(I\)-indexed filtration of \(M\), 
and correspondingly, persistent homologies and persistent diagrams \(\phd_k(\gamma), k=0,\ldots,m\),
which we interpret as a collections of distinguishable points in planes \(\{b<d\}\).
The results of the previous section describe the nature of these persistence diagrams.
When the curve hits an index $k$ segment of the Pareto grid, a $k$-cell is attached
to the slice set and so either the dimension of the $k$-th homology of the slice
set increases by one or that of the $k-1$-th homology decreases by one.
Therefore certain pairs of intersections of the curve $\gamma$ with an index $k$ segment
and an index $k+1$ segment leads to a birth-death pair in the $k$-th homology of the slice
set, and such pairs collected together form the persistence diagram $\phd_k(\gamma)$.

\begin{rem}
The approach to multiparametric persistence through restriction to
increasing \textit{straight} lines has been considered by \cite{Cer19, lesnick}.
\end{rem}

We will see that we can completely describe the change in persistence diagram
as the curve varies in the set of increasing curves with fixed endpoints
avoiding the obstacle points. It is natural to understand this space
of increasing curves first.

\begin{prp} In a generic family of increasing paths (and for a generic $h$), the sets of paths passing through an obstacle is a smooth hypersurface; the hypersurfaces corresponding to the different obstacles intersect transversally. 
\end{prp}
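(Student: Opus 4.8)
The plan is to realize the space of increasing paths with fixed endpoints as an honest (infinite-dimensional) manifold on which ``passing through a given obstacle'' is cut out by a single smooth functional with nowhere-vanishing differential; transversality of the loci of different obstacles will then follow from the fact that distinct obstacles sit at distinct values of the path parameter. Genericity of $h$ enters only through Section~\ref{sec:sec3}, to guarantee that the obstacle set is finite.

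First I would fix endpoints $p=(a_0,b_0)$ and $q=(a_1,b_1)$ with $a_0<a_1$, $b_0<b_1$, and use the $f+g$-gauge to write every increasing path from $p$ to $q$ as $\gamma_u(t)=(u(t),\,t-u(t))$ for $t\in[t_0,t_1]$, $t_i=a_i+b_i$, where $u\in C^k([t_0,t_1])$ ($k\ge 1$ fixed), $u(t_0)=a_0$, $u(t_1)=a_1$, and $0<u'<1$ (the last condition encoding that $f$ and $g$ strictly increase along $\gamma_u$). The set $\mathcal{P}$ of such $u$ is open --- $0<u'<1$ is $C^1$-open --- in the closed affine subspace $\{u\in C^k:u(t_0)=a_0,\ u(t_1)=a_1\}$, which is nonempty since $(a_1-a_0)/(t_1-t_0)\in(0,1)$; hence $\mathcal{P}$ is a Banach manifold with $T_u\mathcal{P}=\{v\in C^k:v(t_0)=v(t_1)=0\}$. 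Only obstacles $o$ lying in the open rectangle $(a_0,a_1)\times(b_0,b_1)$ can be met by a path of $\mathcal{P}$, and I restrict attention to these.

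Next, for such an obstacle $o=(\alpha,\beta)$ with natural height $s_o=\alpha+\beta\in(t_0,t_1)$, the path $\gamma_u$ --- being parameterized by natural height --- passes through $o$ exactly when $\gamma_u(s_o)=o$, i.e.\ when $u(s_o)=\alpha$. So the obstacle locus is $E_o^{-1}(0)$ for the affine functional $E_o(u):=u(s_o)-\alpha$, whose differential at every $u$ is the bounded point-evaluation $v\mapsto v(s_o)$, which is onto $\Real$ on $T_u\mathcal{P}$ (take a $C^k$ bump supported in a short subinterval around $s_o$ inside $(t_0,t_1)$ with value $1$ at $s_o$; it vanishes at $t_0,t_1$, hence lies in $T_u\mathcal{P}$). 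Thus $E_o$ is a submersion, $0$ is a regular value, and $E_o^{-1}(0)$ is a smooth closed hypersurface --- nonempty, since an increasing arc $p\to o$ concatenated and smoothed with one $o\to q$ is such a path. That is the first assertion. For two distinct obstacles $o_1=(\alpha_1,\beta_1)$, $o_2=(\alpha_2,\beta_2)$ with natural heights $s_1,s_2$: if $s_1=s_2$ then $\alpha_1\neq\alpha_2$, no $u$ meets both, and the two loci are disjoint, so transversality is vacuous; if $s_1\neq s_2$, the differential of $E:=(E_{o_1},E_{o_2})\colon\mathcal{P}\to\Real^2$ is $v\mapsto(v(s_1),v(s_2))$, which is onto $\Real^2$ via two bumps with disjoint short supports about $s_1$ and $s_2$, so $E_{o_1}^{-1}(0)\cap E_{o_2}^{-1}(0)=E^{-1}(0)$ is a smooth codimension-$2$ submanifold --- i.e.\ the two obstacle hypersurfaces meet transversally. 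Finiteness of the obstacle set closes the argument.

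In this infinite-dimensional model the genericity of the \emph{family} is not actually needed; it becomes relevant only if one prefers to work with a finite-dimensional family $\Phi\colon S\to\mathcal{P}$ of paths, where the relevant loci are the preimages $\Phi^{-1}(E_o^{-1}(0))$ and their intersections, and Thom's parametric transversality theorem --- each $E_o^{-1}(0)$, and each pairwise intersection, being a fixed closed finite-codimension submanifold of $\mathcal{P}$ --- gives the statement for $\Phi$ in a residual set. The argument is soft throughout; the only place that needs care is the first step, pinning down a function-space model (which the $f+g$-gauge supplies) on which $\mathcal{P}$ is a Banach manifold and point evaluation is a smooth functional that is a submersion, so that ``regular value'' and ``transversality'' are literally meaningful.
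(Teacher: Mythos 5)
Your proposal is correct and follows essentially the intended route: the paper gives no detailed proof of this proposition, but its implicit argument (made explicit later, where the condition of passing through an obstacle is described as a codimension-one ``hyperplane'' in the space of curves parameterized by natural height, with transversality ``following from the definition'') is exactly your observation that, in the $f+g$-gauge, meeting the obstacle $o=(\alpha,\beta)$ is the affine point-evaluation condition $u(\alpha+\beta)=\alpha$, and evaluations at distinct heights are independent submersions. Your Banach-manifold ($C^k$) formalization and the vacuous-transversality remark for obstacles of equal natural height are a careful fleshing-out of the same idea rather than a different method.
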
 
The hypersurfaces are naturally cooriented (by the increase of number of intersections with the Pareto grid).

We say that increasing curves $\gamma$ and $\gamma^{'}$ connecting the same endpoints
belong to the same path-connected collection if there exists a homotopy of increasing curves
fixing the endpoints connecting the two curves.
The following sequence of results describe how the persistent homology changes as
the curve $\gamma$ moves in the space of increasing paths.

\begin{thm}For the path-connected collection of increasing curves in the plane avoiding obstacles, there is a section of the space of persistent diagrams: that is for any two such curves \(\gamma,\gamma'\), there is an identification
\[
I(\gamma',\gamma):\phd_*(\gamma)\to\phd_*(\gamma'),
\]
of the bars in the persistent diagrams corresponding to each of the curves, and these identifications are consistent:
\[
I(\gamma'',\gamma')I(\gamma',\gamma)=I(\gamma'',\gamma).
\]
\end{thm}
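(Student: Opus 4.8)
The plan is to build the identification $I(\gamma',\gamma)$ by first establishing it for an infinitesimal move of the curve and then integrating along a path in the space of increasing curves. Concretely, fix a path-connected collection $\mathcal{C}$ of increasing curves avoiding obstacles, and let $\gamma_t$, $t\in[0,1]$, be a homotopy within $\mathcal{C}$ from $\gamma=\gamma_0$ to $\gamma'=\gamma_1$, fixing endpoints. By the previous proposition, we may take this homotopy to be generic, so that it meets the hypersurfaces of obstacle-crossing curves and the walls where the curve $\gamma_t$ becomes tangent to (or changes its intersection pattern with) a smooth component of the Pareto grid only at finitely many times $0<t_1<\cdots<t_N<1$, transversally. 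Between consecutive walls, the combinatorial intersection data of $\gamma_t$ with the Pareto grid — the cyclic sequence of indices of the segments hit, together with their heights in the natural parameter — varies continuously and without bifurcation, so the birth–death pairing computed from that data (as described in the paragraph preceding the theorem) produces bars whose endpoints move continuously; this gives the identification $I(\gamma_{t'},\gamma_t)$ for $t,t'$ in one interval of regularity, manifestly consistent there.

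The content is therefore entirely in the wall-crossings, and the two types must be analysed separately. First, a wall where $\gamma_t$ acquires (or loses) a tangency with a smooth arc of the Pareto grid away from obstacles: here the curve picks up two new transversal intersections with the \emph{same} index-$k$ segment at nearly equal heights. By Theorem~\ref{th:attach slice} each such crossing attaches a $k$-cell, and since the slices on the two sides of the tangency moment are homeomorphic (they lie in one component of the complement of the Pareto grid by Theorem~\ref{th:homotopic slice}), the net effect is trivial: the two new intersections form a birth–death pair of a bar of length $\to 0$, and the remaining bars are canonically unchanged. One must check, using the fact that $f$ and $g$ are Morse and the curvature of the contour is nonzero (the Proposition in the singularities section), that no other bar's endpoints are affected — i.e. that the short bar is "born last, dies first" and does not interleave with existing bars in the pairing. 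Second, a wall where $\gamma_t$ sweeps across an obstacle (a cusp or pseudocusp): by Proposition~\ref{p:ind change} the two branches meeting at the obstacle have indices $k$ and $k+1$, and crossing the obstacle changes the number of intersections with each branch by one in opposite directions — a $k$-intersection and a $(k+1)$-intersection appear or disappear together. But the collection $\mathcal{C}$ \emph{avoids} obstacles, so such a wall is never actually crossed within $\mathcal{C}$; it bounds $\mathcal{C}$. Hence within a path-connected obstacle-avoiding collection only the tangency walls occur, and at each the identification extends by matching all bars except the vanishing short bar, which is cancelled on both sides.

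Having defined $I(\gamma',\gamma)$ along one generic homotopy, I would then argue it is independent of the chosen homotopy within $\mathcal{C}$: two such homotopies are themselves homotopic (through a two-parameter family), and one checks that the codimension-two strata of the two-parameter family — generic crossings of two tangency walls, or a swallowtail-type degeneracy of a single tangency — induce relations among the elementary identifications that are automatically satisfied, because each elementary identification is simply "delete the short bar." This also yields the cocycle identity $I(\gamma'',\gamma')I(\gamma',\gamma)=I(\gamma'',\gamma)$, since one may compute all three along a single concatenated homotopy. The main obstacle I anticipate is the bookkeeping at the tangency walls: one must be sure that the birth–death \emph{pairing} (not just the multiset of homology dimensions) is stable across the wall, which requires knowing that the algorithmic pairing of index-$k$ with index-$(k+1)$ intersections along the curve is governed by the nesting/interleaving order of heights and that the two colliding intersections are paired with each other in the limit. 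Making this precise is essentially a one-parameter persistence stability argument applied fibrewise along the homotopy, and I would cite the standard stability of persistence diagrams (\cite{cohen2007}) to close it, after setting up the family of one-parameter filtrations $s\mapsto M_{\gamma_t(s)}$ as a continuously varying input.
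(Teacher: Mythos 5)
Your overall strategy (integrate the identification along a generic homotopy and analyze the finitely many wall-crossings) is in the spirit of the paper's case-by-case argument, but you have misidentified what the walls actually are, and the misidentification hides exactly the step where the real content lies. Within an obstacle-avoiding family the set of Pareto-grid segments hit by the curve never changes: the Pareto segments have negative slope and the extension rays are vertical or horizontal, so a strictly increasing curve meets each of them at most once and transversally; tangencies of the kind you describe, where a pair of intersections with the same segment is created or destroyed, cannot occur, and intersections can only appear or disappear when the curve crosses a cusp or pseudocusp (which is excluded). The walls that do occur --- and that carry all the difficulty --- are the moments when the homotopy sweeps the curve across a \emph{self-intersection (double point) of the Pareto grid}, so that the order in which two fixed segments are crossed swaps. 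The paper treats these in two stages: when the two segments have indices $k$ and $k+1$ one shows by a homological argument (comparing with nearby points $c^\pm$ ordered below and above the double point) that a bar cannot be born on one branch and die on the other, so the pairing by segments is unaffected; when the two segments have the \emph{same} index $k$, the birth--death pairing can genuinely flip, and the identification only survives after augmenting the grid, i.e.\ re-gluing the four half-segments at the double point ($l^-_l$ with $l^+_r$ and $l^-_r$ with $l^+_l$). Your proposal never confronts this flip phenomenon, and your fallback --- a fibrewise appeal to bottleneck stability --- cannot repair it: stability gives the existence of some matching, continuously in $t$, but at a flip the optimal matching is not unique, so it does not produce a canonical, consistent section of bar identifications, which is precisely what the theorem asserts.

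A second, smaller gap: in your "regular interval" step you assert that the birth--death pairing is computed combinatorially from the sequence of indices of the segments hit and therefore deforms continuously. The pairing is homological, not combinatorial, so even in the regular regime one must prove that two nearby curves hitting the same segments in the same order carry the same bars labelled by the same pairs of segments; the paper does this by choosing comparison points $c^{\pm}_i$ dominating (resp.\ dominated by) points of both curves near each crossing and transporting the class characterizing a bar through the induced isomorphisms. Incorporating that argument, and replacing your tangency-wall analysis by the double-point analysis (with the same-index augmentation), is what is needed to make the proof go through.
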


\begin{proof}

    We denote by $\lifecont{}{} \supset \lifecont{k}{} \supset \lifecont{k}{\gamma}$
    the set of smooth segments of the complement of obstacles of the Pareto grid, 
    the set of smooth segments of index $k$ and the set of smooth
    segments of index $k$ and hitting the increasing curve
    $\gamma$ respectively. If $\lcpiece \in \lifecont{k}{\gamma}$, we will denote by $\hittime{\lcpiece}{\gamma}$
    the unique time at which $\gamma$ hits $\lcpiece$. 
    
    If $\gamma$ and $\gamma'$ satisfy the conditions of the theorem,
    $\lifecont{k}{\gamma} = \lifecont{k}{\gamma'}$ for all $k$. Only
    when a curve crosses either a pseudocusp or a cusp can it hit a new segment of the
    Pareto grid.

    As far as $\phd_k$ is concerned, only the index $k$ and $k+1$ segments matter.
    So we prove this result in three stages, assuming the curves $\gamma$ and $\gamma'$
    can be connected to each other by a homotopy

    \begin{enumerate}
        \item avoiding the obstacles and intersection points of two segments
            from $\lifecont{k}{}\cup\lifecont{k+1}{}$,
        \item avoiding the obstacles and intersection points of two segments
            of the same index,
        \item avoiding the obstacles only.
    \end{enumerate}

    \textbf{Case 1:} In this case, it is not only true that $\lcpiece \in \lifecont{k}{\gamma} = \lifecont{k}{\gamma'}$
    but the order in which the two curves hit each smooth segment is also the same.
     The recipe to construct the identification $I_k(\gamma', \gamma)$
    between the persistence diagrams of $\gamma$ and $\gamma'$ is to simply
    map $(\hittime{\lcpiece_1}{\gamma}, \hittime{\lcpiece_2}{\gamma}) \in \pd_k(\gamma)$
    to $(\hittime{\lcpiece_1}{\gamma'}, \hittime{\lcpiece_2}{\gamma'}) \in \pd_k(\gamma')$.
    The consistency of this identification follows obviously. We just need to show that
    $(\hittime{\lcpiece_1}{\gamma}, \hittime{\lcpiece_2}{\gamma}) \in \pd_k(\gamma)
    \implies (\hittime{\lcpiece_1}{\gamma'}, \hittime{\lcpiece_2}{\gamma'}) \in \pd_k(\gamma')$.
    
    We show this assuming $\gamma'$ lies in an $\epsilon$ tube around $\gamma$ for small enough 
    $\epsilon$. The general result will follow since $\gamma$ and $\gamma'$ can be connected
    by finitely many such tubes. If the $\epsilon$ is small enough, we can choose
    points $\gamma(t^{\pm}_{l_i})$ just after and before $\gamma(t_{l_i})$ for $i=1,2$.
    We can do the same for $\gamma'$, and then choose $c^{-}_i$ close to $\gamma(t_{l_i})$
    such that $c^{-}_i \prec \gamma(t^{-}_{l_i})$ and $c^{-}_i \prec \gamma'(t^{-}_{l_i})$,
    and a $c^{+}_i$ also close to $\gamma(t_{l_i})$ such that 
    $c^{+}_i \succ \gamma(t^{+}_{l_i})$ and $c^{+}_i \succ \gamma'(t^{+}_{l_i})$.
    These choices are illustrated in Figure \ref{fig:id pd fig 1}. If $\epsilon$ is
    close enough, the inclusions $i_{c^{-}_i, \gamma^{-}_{t_i}}, i_{c^{-}_i, \gamma'^{-}_{t_i}},
    i_{\gamma^{+}_{t_i},c^{+}_i}, i_{\gamma'^{+}_{t_i},c^{+}_i}$ are all homotopy
    equivalences.

    The existence of a bar between $t_{l_i}$ and $t_{l_j}$ along $\gamma$ is equivalent to
    the existence a homology class $\alpha$ in $\homol_{k}(\filt{t^{+}_{l_i}})$ that
    is:
    \begin{enumerate}[(i)]
        \item not in the image of $i_{\gamma(t^{-}_{l_i}), \gamma(t^{+}_{l_i})}$,
        \item whose image under $i_{\gamma(t^{+}_{l_i}), \gamma(t^{-}_{l_j})}$ is not in
            the image of $i_{\gamma(t^{-}_{l_i}), \gamma(t^{-}_{l_j})}$,
        \item and whose image under $i_{\gamma(t^{+}_{l_i}), \gamma(t^{+}_{l_j})}$ is in the
            image of $i_{\gamma(t^{-}_{l_i}), \gamma(t^{+}_{l_j})}$.
    \end{enumerate}
    We can push this class $\alpha$ via the isomorphism $i_{\gamma(t^{+}_i), c^{+}_i}$ to obtain
    a homology class that is:
    \begin{enumerate}[(i)]
        \item not in the image of $i_{c^{-}_i, c^{+}_i}$,
        \item whose image under $i_{c^{+}_i, c^{-}_j}$ is not in the image of $i_{c^{-}_i, c^{-}_j}$,
        \item and whose image under $i_{c^{+}_i, c^{+}_j}$ is in the image of $i_{c^{-}_i, c^{+}_j}$,
    \end{enumerate}
    owing to the fact that each $c^{\pm}_{i/j}$ point and the corresponding $t^{\pm}_{i/j}$
    point has been chosen such that inclusion induces an isomorphism in the $k$-th homology
    between them. This same procedure can be used to push this class onto the corresponding
    $t^{'\pm}_{i/j}$ points to obtain a homology class that is born at $t^{'}_i$ and
    dies at $t^{'}_j$ along $\gamma'$. This proves the result for case 1.

    \begin{figure}[ht]
        \centering
        \begin{tikzpicture}[every node/.style={black,right}]

            \draw[ultra thick, name path=lc] (0,6) .. controls (3,3) and (6,1) .. (10,0);
            \draw[ultra thick, green, name path=gamma] (0,1) .. controls (3,3) and (5,4) .. (10,5.5);
            \draw[ultra thick, green, name path=gamma2] (0,0.5) .. controls (5,3.5) .. (10,6);

            \path [name intersections={of=lc and gamma,by=ti}];
            \node (tin) [circle,fill=purple,inner sep=1.5pt,label={}] at (ti) {};
            \path [name intersections={of=lc and gamma2,by=t2i}];
            \node (t2in) [circle,fill=purple,inner sep=1.5pt,label={}] at (t2i) {};
            \node (tinlabel) [label={}, above=1.5cm of tin] {$\gamma(t_{l_i})$};
            \draw[->, thick, blue] (tin) -- (tinlabel);
            \node (t2inlabel) [label={}, below=1.5cm of t2in] {$\gamma'(t^{'}_{l_i})$};
            \draw[->, thick, blue] (t2in) -- (t2inlabel);
            
            \path[name path=Vmin] (2.75,0) -- (2.75,5.5);
            \path [name intersections={of=gamma and Vmin,by=t-i}];
            \node (tminin) [circle,fill=purple,inner sep=1.5pt,label={}] at (t-i) {};
            \path [name intersections={of=gamma2 and Vmin,by=t2-i}];
            \node (t2minin) [circle,fill=purple,inner sep=1.5pt,label={}] at (t2-i) {};
            \node (tminlabel) [label={}, above=1.5cm of tminin] {$\gamma(t^{-}_{l_i})$};
            \draw[->, thick, blue] (tminin) -- (tminlabel);
            \node (t2minlabel) [label={}, below=1.5cm of t2minin] {$\gamma'(t^{'-}_{l_i})$};
            \draw[->, thick, blue] (t2minin) -- (t2minlabel);

            \path[name path=Vplus] (4.6,0) -- (4.6,5.5);
            \path [name intersections={of=gamma and Vplus,by=t+i}];
            \node (tplusin) [circle,fill=purple,inner sep=1.5pt,label={}] at (t+i) {};
            \path [name intersections={of=gamma2 and Vplus,by=t2+i}];
            \node (t2plusin) [circle,fill=purple,inner sep=1.5pt,label={}] at (t2+i) {};
            \node (tpluslabel) [label={}, above=1.5cm of tplusin] {$\gamma(t^{+}_{l_i})$};
            \draw[->, thick, blue] (tplusin) -- (tpluslabel);
            \node (t2pluslabel) [label={}, below=1.5cm of t2plusin] {$\gamma'(t^{'+}_{l_i})$};
            \draw[->, thick, blue] (t2plusin) -- (t2pluslabel);

            \draw[fill=cyan, opacity=0.2] (tin) circle [radius=3];
            \draw[fill=yellow, opacity=0.2] (tin) circle [radius=1.5];
            \node (radius) [label={90:radius: $ \epsilon$},left = 2.95cm of tin] {};
            \draw[->, thick, blue, dashed] (tin) -- (radius);

            \node (cplus) [circle,fill=purple,inner sep=1.5pt,label={},above right = 1cm of t+i] {};
            \node (cpluslabel) [label={}, above right=1cm of cplus] {$c^{+}_{i}$};
            \draw[->, thick, blue] (cplus) -- (cpluslabel);
            \node (cmin) [circle,fill=purple,inner sep=1.5pt,label={},below left = 1cm of t2-i] {};
            \node (cminlabel) [label={}, below left=1cm of cmin] {$c^{-}_{i}$};
            \draw[->, thick, blue] (cmin) -- (cminlabel);

        \end{tikzpicture}
        \caption{The green increasing lines indicate the curves $\gamma$ and $\gamma'$
            and the black line with negative slope is the piece of the life contour $\lcpiece$.
            The blue disk around $\gamma(t_{l_i})$ is a possible choice for the $\epsilon$ tube.
            Also indicated are possible choices for
            $t^{\pm}_{l_i},t^{'\pm}_{l_i},c^{\pm}_{i}$.
        }
        \label{fig:id pd fig 1}
    \end{figure}
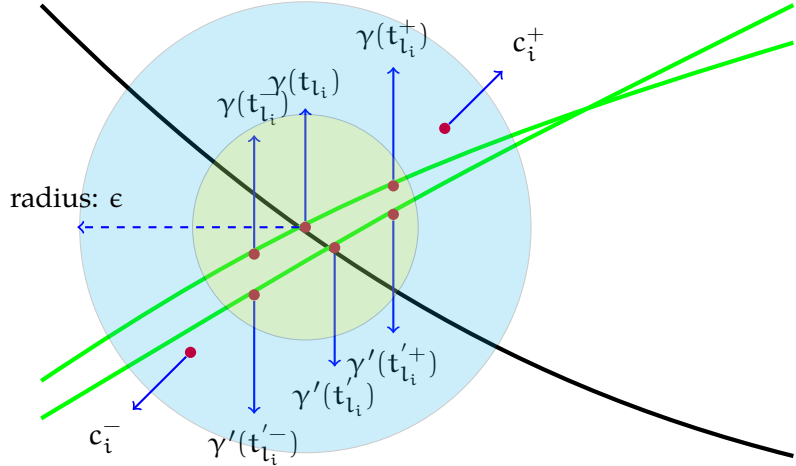
    
    \textbf{Case 2:} In this case, there can be intersections between an index $k$
    and an index $k+1$ segment of the grid. This situation is illustrated in Figure 
    \ref{fig:id pd fig 2}. We have two segments $l_l, l_r$ of the Pareto grid intersecting
    at a point. We can also just show the result for curves $\gamma$ and $\gamma'$ 
    that are equal outside a neighborhood of the intersection point. Suppose
    $l_l$ has index $k$ and $l_r$ has index $k+1$. The same mapping in case 1,
    sending $(\hittime{\lcpiece_1}{\gamma}, \hittime{\lcpiece_2}{\gamma}) \in \pd_k(\gamma)$
    to $(\hittime{\lcpiece_1}{\gamma'}, \hittime{\lcpiece_2}{\gamma'}) \in \pd_k(\gamma')$
    will work in this case as well. All that needs to be shown is that a bar born
    along $l^-_l$ will not die at $l^-_r$. If this did happen, 
    \begin{align*}
        i_{c^-,\gamma(t^+)}\circ i_{\gamma(t^-),c^-} = i_{\gamma(t^-), \gamma(t+)}
    \end{align*}
    would be an isomorphism. However,
    \begin{align*}
        i_{c^+,\gamma(t^+)}\circ i_{\gamma(t^-),c^+} = i_{\gamma(t^-), \gamma(t+)}
    \end{align*}
    can't be an isomorphism, as $i_{\gamma(t^-),c^+}$ is not injective.
    \begin{figure}[ht]
        \centering
        \begin{tikzpicture}[every node/.style={black,right}]

            \draw[ultra thick, name path=lc] (1,6) .. controls (3,3) and (6,1) .. (9,0);
            \draw[ultra thick, name path=lc2] (0,4) .. controls (5,2) .. (10,1.5);
            \draw[ultra thick, green, name path=gamma] (0,0) -- (3,1.5) -- (3.5,3) -- (6,3.5) -- (10,5);
            \draw[ultra thick, green, name path=gamma2] (0,0) -- (3,1.5) -- (5.5,2) -- (6,3.5) -- (10,5);
            
            \node (c-) [circle,fill=cyan,inner sep=1.5pt,label={}] at (3.5, 3) {};
            \node (c-label) [label={}, above=0.5cm of c-] {$c^-$};
            \draw[->, thick, blue] (c-) -- (c-label);

            \node (c+) [circle,fill=cyan,inner sep=1.5pt,label={}] at (5.5, 2) {};
            \node (c+label) [label={}, below=0.5cm of c+] {$c^+$};
            \draw[->, thick, blue] (c+) -- (c+label);

            \node (t-) [circle,fill=cyan,inner sep=1.5pt,label={}] at (3, 1.5) {};
            \node (t-label) [label={}, below=0.5cm of t-] {$\gamma(t^{-}) = \gamma'(t^{-})$};
            \draw[->, thick, blue] (t-) -- (t-label);
            \node (t+) [circle,fill=cyan,inner sep=1.5pt,label={}] at (6, 3.5) {};
            \node (t+label) [label={}, above=0.5cm of t+] {$\gamma(t^{+}) = \gamma'(t^{+})$};
            \draw[->, thick, blue] (t+) -- (t+label);

            \path [name intersections={of=lc and lc2,by=obspt}];
            \node (obs) [circle,fill=red,inner sep=1.5pt,label={}] at (obspt) {};
            
            \path[name path=V] (4.5,0) -- (4.5,5.5);
            \path [name intersections={of=gamma and V,by=gammalabpt}];
            \node (gammalab) [label={}] at (gammalabpt) {};
            \node (gammalabel) [label={}, above=0.5cm of gammalab] {$\gamma$};
            \draw[->, thick, blue] (gammalab) -- (gammalabel);
            
            \path [name intersections={of=gamma2 and V,by=gamma2labpt}];
            \node (gamma2lab) [label={}] at (gamma2labpt) {};
            \node (gamma2label) [label={}, below=0.5cm of gamma2lab] {$\gamma'$};
            \draw[->, thick, blue] (gamma2lab) -- (gamma2label);

            \path[name path=Vmin] (2.5,0) -- (2.5,5.5);
            \path [name intersections={of=lc and Vmin,by=lclabpt}];
            \path[name path=V2min] (2,0) -- (2,5.5);
            \path [name intersections={of=lc2 and V2min,by=lc2labpt}];
            \node (lclab) [label={}] at (lclabpt) {};
            \node (lclabel) [label={}, left=0.5cm of lclab] {$\lcpiece^-_{r}$};
            \draw[->, thick, blue] (lclab) -- (lclabel);
            \node (lc2lab) [label={}] at (lc2labpt) {};
            \node (lc2label) [label={}, left=0.5cm of lc2lab] {$\lcpiece^-_{l}$};
            \draw[->, thick, blue] (lc2lab) -- (lc2label);
            
            \path[name path=Vmax] (7,0) -- (7,5.5);
            \path [name intersections={of=lc and Vmax,by=lcllabpt}];
            \path[name path=V2max] (7.5,0) -- (7.5,5.5);
            \path [name intersections={of=lc2 and V2max,by=lc2llabpt}];
            \node (lcllab) [label={}] at (lcllabpt) {};
            \node (lcllabel) [label={}, below=0.5cm of lcllab] {$\lcpiece^+_{r}$};
            \draw[->, thick, blue] (lcllab) -- (lcllabel);
            \node (lc2llab) [label={}] at (lc2llabpt) {};
            \node (lc2llabel) [label={}, below=0.5cm of lc2llab] {$\lcpiece^+_{l}$};
            \draw[->, thick, blue] (lc2llab) -- (lc2llabel);

            \draw[fill=yellow, opacity=0.2] (obs) circle [radius=2];

        \end{tikzpicture}
        \caption{The green increasing lines indicate the curves $\gamma$ and $\gamma'$
            and the black lines with negative slope are two segments of the Pareto grid.
            The figure indicates the further splitting
            of the two segments into four pieces by the intersection point. 
        }
        \label{fig:id pd fig 2}
    \end{figure}
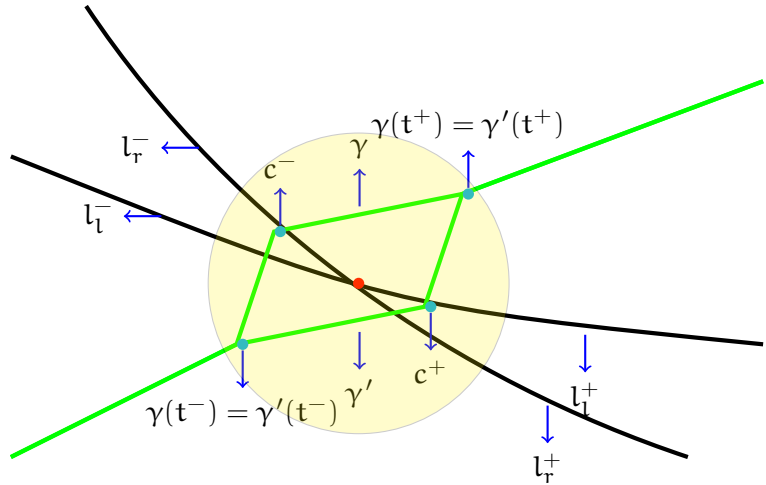

    \textbf{Case 3:} In this final case, we have to consider intersections of
    segments of the same index $k$ as well. Figure \ref{fig:id pd fig 2} serves
    as an illustration for this case as well, with the difference that $l_l$ and $l_r$
    both have index $k$. The usual mapping employed in the earlier cases won't necessarily work 
    here. For instance, if $l^-_l$ cause the birth of a cycle while $l^-_r$ causes
    a death, it could be the case that $l^+_l$ causes a death while $l^+_r$ causes
    a birth. In such situations where the bars flip at a double point of the same
    index, we need to augment the incident segments of the life contour by connecting
    $l^-_l$ with $l^+_r$ and $l^-_r$ with $l^+_l$. After this possible augmentation, the
    earlier map between persistence diagrams work here as well.
\end{proof}

The following proposition clearly follows from the above proof.

\begin{prp}
If the deformation of the increasing curve avoids not just the obstacles, but also the double points of the Pareto grid where the branches of the same index intersect, then the chains representing the elements of each of the persistent homology groups can be transported, consistently, along the deformation.
\end{prp}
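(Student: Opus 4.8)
The plan is to reread the proof of the preceding theorem and observe that, under the extra hypothesis, only Cases~1 and~2 of that proof ever occur, and in those two cases the identification of persistence diagrams was already produced by \emph{honest} maps between the slice spaces, hence acts on chains and not merely on homology classes. Concretely, I would first decompose the homotopy $\{\gamma_s\}$ of increasing curves (fixing endpoints, avoiding the obstacles \emph{and} the double points of the Pareto grid joining two branches of the same index) into finitely many elementary steps. By the proposition on the hypersurfaces of curves through an obstacle, together with the analogous (transversality) description of curves through a double point and the compactness of $[0,1]$, one may arrange each elementary step to be supported in a small tube in which $\gamma_s$ sweeps across either no distinguished feature, or a single crossing of a smooth segment of the Pareto grid, or a single transversal intersection of two branches of \emph{distinct} indices $k$ and $k+1$. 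These are exactly Cases~1 and~2; Case~3 is excluded by hypothesis.

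Next I would treat an elementary step. As in the theorem's proof, near each relevant crossing time $\hittime{\lcpiece}{\gamma}$ I pick auxiliary points $c_i^\pm$ just off the curve, close enough that the inclusions $M_{c_i^-}\hookrightarrow \filt{t_i^-}$, $M_{c_i^-}\hookrightarrow M_{\gamma'(t_i^-)}$, $\filt{t_i^+}\hookrightarrow M_{c_i^+}$, $M_{\gamma'(t_i^+)}\hookrightarrow M_{c_i^+}$ are homotopy equivalences (Theorem~\ref{th:homotopic slice} guarantees such $c_i^\pm$ exist). A cycle $z$ in a slice $\filt{t}$ that represents a bar of $\phd_k(\gamma)$ is then pushed forward by the literal inclusion chain maps through the zig-zag $\cdots M_{c_i^-}\to\cdots\to M_{c_i^+}\cdots$ to a cycle in the corresponding slice along $\gamma'$; since the intermediate inclusions are homotopy equivalences the homology class is determined, and since they are inclusions of subspaces the transport is defined on actual chains. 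In Case~1 the birth--death pairing of segments of $\lifecont{k}{\gamma}=\lifecont{k}{\gamma'}$ is preserved because the \emph{order} of crossings is preserved, so the recipe $(\hittime{\lcpiece_1}{\gamma},\hittime{\lcpiece_2}{\gamma})\mapsto(\hittime{\lcpiece_1}{\gamma'},\hittime{\lcpiece_2}{\gamma'})$ carries a representing cycle to a representing cycle. In Case~2 the same recipe works because, as shown in the theorem, a class born on $\lcpiece_l^-$ cannot die on $\lcpiece_r^-$ when the two crossing branches have indices $k$ and $k+1$; hence no re-pairing occurs and the chain transport is again well posed.

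Finally I would check consistency. The transport maps are built entirely from inclusion-induced chain maps, which compose strictly (functoriality of inclusions), and the only indeterminacy is the choice of the $c_i^\pm$ and the tube width, which is absorbed by the homotopies witnessing the homotopy equivalences; this is precisely what is needed for the chain-level transports to satisfy $I(\gamma'',\gamma')I(\gamma',\gamma)=I(\gamma'',\gamma)$, lifting the corresponding identity for diagrams. Composing the transports over the finitely many elementary steps yields the asserted transport of the cycles (and their images under the structure maps of the persistence module) along the whole deformation.

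The main obstacle --- and the reason the hypothesis is exactly the right one --- is what happens at a same-index double point, i.e. Case~3: there the bars can flip, and the remedy used in the theorem (reconnecting $\lcpiece_l^-$ with $\lcpiece_r^+$ and $\lcpiece_r^-$ with $\lcpiece_l^+$ by augmenting the grid) \emph{re-labels} which pair of segments carries a given bar, so there is no canonical cycle that "continues" across the crossing; the identification survives only at the level of diagrams. Once such points are avoided this ambiguity disappears, and the remaining verifications --- that the local chain transport is independent of the auxiliary choices and glues across the elementary steps --- are routine given the proof of the theorem.
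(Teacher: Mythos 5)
Your proposal is correct and follows essentially the same route as the paper, which simply notes that the proposition follows from the proof of the preceding theorem: with same-index double points excluded, only Cases~1 and~2 arise, and there the identification is built from inclusion-induced maps between slices, so it operates on representative cycles and composes functorially. Your elaboration (decomposing the homotopy into elementary tube-sized steps and noting that the Case~3 augmentation is what destroys canonical chain transport) is exactly the intended reasoning; the only point to state a bit more carefully is that transporting a chain backwards along an inclusion that is a homotopy equivalence requires choosing a representative in the smaller slice (well defined up to boundary), rather than being a literal inclusion-pushforward.
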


As the increasing path crosses a double point of the Pareto grid, the persistent diagram changes continuously, but the birth or death times of a pair of cycles cross, and their corresponding cycles can flip. This phenomenon was noted in \cite{Cer19}. In some sense, this implies that one cannot observe the {\em Elder Rule} \cite{harer2010} functorially along the increasing curve in biparametric persistence.

We can also predict what happens when the curve crosses an obstacle.

\begin{prp}
The increasing curve crossing into (out of) a region with higher number of intersections with the Pareto grid leads to the birth (death) of a $k$-dimensional persistence bar at the the natural height of the corresponding (pseudo)cusp.
\end{prp}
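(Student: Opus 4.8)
The plan is to localize the crossing near the (pseudo)cusp $p$ and to read off the resulting change of the barcode from the two extra intersections with the Pareto grid that it produces. First I would observe that the deformation of the increasing curve realizing the crossing of the obstacle hypersurface --- the hypersurface of increasing curves through $p$ --- can be taken supported in an arbitrarily small neighbourhood $U$ of $p$ in the $(f,g)$-plane, so that the curve $\gamma$ on the side with fewer intersections and the curve $\gamma'$ on the side with more intersections coincide outside $U$. Then $M_{\gamma(s)}=M_{\gamma'(s)}$ for every natural height $s$ outside a short interval about the natural height $h_p$ of $p$; the persistence modules of $\gamma$ and $\gamma'$ agree there, and, shrinking $U$ if necessary, neither curve meets any other branch of the Pareto grid at heights in that interval. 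So the entire effect is confined to heights near $h_p$.

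Next I would analyse the local picture at $p$. By the coorientation of the obstacle hypersurface (increase of the number of intersections with the Pareto grid), crossing it in the coorienting direction inserts exactly two new intersections of the curve, one on each branch meeting at $p$; by Proposition \ref{p:ind change} these branches are ordered, the lower one $l_\ell$ of some index $k$ and the upper one $l_u$ of index $k+1$. Since $\gamma'$ is increasing and every point of $l_\ell$ near $p$ precedes points of $l_u$, the curve $\gamma'$ meets $l_\ell$ at a height $t_\ell$ strictly before meeting $l_u$ at a height $t_u$, with both crossing points in $U$. By Theorem \ref{th:attach slice}, passing $t_\ell$ attaches a $k$-cell and passing $t_u$ a $(k+1)$-cell to the slice, while the inclusion $M_{\gamma'(t_\ell^-)}\hookrightarrow M_{\gamma'(t_u^+)}$ --- being the very same set inclusion as $M_{\gamma(t_\ell^-)}\hookrightarrow M_{\gamma(t_u^+)}$, which is a homotopy equivalence by Theorem \ref{th:homotopic slice} --- induces an isomorphism on all $\homol_j$. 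Exactly as in the proof of Proposition \ref{p:ind change}, this forces the $k$-cell to create a class $\alpha\in\homol_k$ and the $(k+1)$-cell to annihilate precisely $\alpha$, leaving every other homology class, and hence every other bar, untouched. Therefore $\phd_k(\gamma')$ is $\phd_k(\gamma)$ together with one new bar $(t_\ell,t_u)$, and $\phd_j(\gamma')=\phd_j(\gamma)$ for $j\neq k$.

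Finally I would pin down where this bar sits. Since $t_\ell$ and $t_u$ are the natural heights of the crossing points $\gamma'(t_\ell)$ and $\gamma'(t_u)$, and both crossing points converge to $p$ as the deformation parameter approaches the obstacle hypersurface, we get $t_\ell,t_u\to h_p$: the new bar emerges from the diagonal at natural height $h_p$ and lengthens as the curve moves deeper into the region --- this is the claimed birth of a $k$-dimensional bar at the natural height of the (pseudo)cusp, and reversing the deformation gives the death statement for crossing out. I expect the delicate point to be the local geometry at a \emph{pseudocusp} in the second step: one must check that the obstacle hypersurface there genuinely separates the increasing curves missing the grid near $p$ from those threading both branches, in accordance with the coorientation, and that the two new intersection heights both tend to $h_p$ --- the cusp case is the standard Whitney picture, but the pseudocusp requires the local model of a Pareto arc abutting a coordinate extension ray non-smoothly.
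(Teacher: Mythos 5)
Your proof is correct and takes essentially the same route as the paper: the paper's own (very brief) argument likewise localizes at the (pseudo)cusp and appeals to the proof of Proposition \ref{p:ind change}, so that the two new crossings on the index-$k$ and index-$(k+1)$ branches create (or, reversing, destroy) a single $k$-bar emerging at the natural height of the obstacle. Your write-up merely supplies details the paper leaves to inspection (localizing the deformation, comparing with the non-crossing curve via Theorem \ref{th:homotopic slice}, and noting that both endpoints of the new bar tend to the height of the (pseudo)cusp), including the pseudocusp caveat that the paper also glosses over.
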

\begin{proof}
    The proof of the result follows from the proof of \ref{p:ind change}. Figure \ref{fig:pf1}
    shows the neighborhood of an obstacle. Crossing an obstacle into the region where the curve intersects
    more with the Pareto grid clearly leads to the creation of a $k$-bar across the
    adjacent segments of the grid.
\end{proof}

\begin{rem}
    After the augmentation possibly required at double points of the same index,
    the collection of augmented segments of the Pareto grid get paired together
    as lines of negative slope meeting at obstacles, across which homology cycles
    are born and killed. A pair of such curves, with index $k$ and $k+1$,
    contains a region $B^k$ enclosed between them.
    The set of these regions $\{B^k_i\}$  are analogous to the barcodes in single parameter persistent
    homology; given points $c_1 \prec c_2 \in V$,
    \begin{align*}
        \rank\left(\left(i_{c_1, c_2}\right)_*:\homol_k(M_{c_1}) \rightarrow \homol_k(M_{c_2})\right) = \#\set[]{ B^k_i \suchthat c_1, c_2 \in B^k_i }
    \end{align*}
\end{rem}

\section{Space of Obstacle Avoiding Curves}
The result of the previous section forces us to concentrate on the spaces of the increasing curves avoiding obstacles: as we established, any two such curves which can be homotopied one into another avoiding obstacles define functorially equivalent persistence diagrams. Therefore, it is important to understand the structure of the components of obstacle avoiding increasing curves.

\subsection{Components of the Obstacle Avoiding Curves}
The problem of enumeration of these components turned out to be remarkably simple.

\begin{prp}\label{prp: avoid} The connected components of a space of increasing curves in the plane avoiding a finite set of obstacles \(o_1,\ldots,o_k\in\Real^2\) are contractible, and their number is equal to the number of {\em chains of obstacles}, i.e. subsets \(o_{i_1}\prec\ldots o_{i_l}\), where \(o_m\prec o_n\) is the vector ordering of the points (empty chain included).
\end{prp}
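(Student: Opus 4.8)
The plan is to convert the problem into a count of chambers of a hyperplane arrangement sitting inside a convex set. Up to reparametrization, an increasing curve is the graph $\{y=\phi(x)\}$ of a strictly increasing continuous function $\phi\colon\Real\to\Real$ (with $\phi(\pm\infty)=\pm\infty$), and reparametrization does not change the homotopy type of the space of curves; the space $\Phi$ of all such $\phi$ is convex, hence contractible. Passing through the obstacle $o_i=(a_i,b_i)$ is the affine condition $\phi(a_i)=b_i$, so the obstacle-avoiding curves form the complement $\Phi\setminus\bigcup_i W_i$ of the $k$ hyperplanes $W_i=\{\phi(a_i)=b_i\}$. The functions $\phi\mapsto\operatorname{sign}(\phi(a_i)-b_i)$ are locally constant on this complement, so each connected component lies in a single chamber
\[
\Phi_\epsilon=\{\phi\in\Phi:\ \operatorname{sign}(\phi(a_i)-b_i)=\epsilon_i,\ i=1,\dots,k\},\qquad \epsilon\in\{+,-\}^k,
\]
and each nonempty $\Phi_\epsilon$, being the intersection of the convex set $\Phi$ with an open orthant, is convex, hence connected and contractible. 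Thus the components correspond exactly to the \emph{realizable} sign vectors $\epsilon$, and contractibility is automatic.

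Next I would pin down which sign vectors are realizable. Set $A=A(\epsilon)=\{i:\epsilon_i=+\}$, the obstacles the curve passes above. I claim $\epsilon$ is realizable iff there is \emph{no} pair $o_i,o_j$ with $o_i$ strictly northwest of $o_j$ (incomparable, $a_i<a_j$, $b_i>b_j$), $i\in A$, $j\notin A$: necessity is immediate, since $\phi(a_i)>b_i$ with $a_i<a_j$ forces $\phi(a_j)>b_i>b_j$; sufficiency is the elementary fact that finitely many strict constraints $\phi(a_i)>b_i$ ($i\in A$) and $\phi(a_j)<b_j$ ($j\notin A$) on a monotone function are jointly satisfiable precisely when no ``above'' point is (weakly, and hence strictly, by general position of the obstacle coordinates) northwest of a ``below'' point. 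Reindexing the obstacles so $a_1<\dots<a_k$ — whence $o_i\prec o_j\Leftrightarrow i<j$ and $b_i<b_j$ — the realizable $A$ are exactly the subsets closed under the relation $i\rightsquigarrow j$ defined by $i<j$, $b_i>b_j$.

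The crux is a bijection between these closed sets and chains of obstacles. Reading a chain in increasing $x$-order identifies chains with increasing subsequences $i_1<\dots<i_l$ of the sequence $(b_1,\dots,b_k)$ (the empty subsequence included). The map sending an increasing subsequence $T$ to its $\rightsquigarrow$-closure $\bar T$ lands in the closed sets, and is inverted by sending a closed set $A$ to its set of left-to-right maxima $S(A)$ (geometrically, the lower-left staircase of $A$), which is an increasing subsequence; the identities $\overline{S(A)}=A$ and $S(\bar T)=T$ follow from a short finite induction whose key point is that any index appended when forming $\bar T$ sits strictly below some earlier index of $\bar T$ and so is never a left-to-right maximum. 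Chaining the three steps together gives
\[
\#\{\text{components}\}=\#\{\text{realizable }A\}=\#\{\text{increasing subsequences of }(b_i)\}=\#\{\text{chains of obstacles}\}.
\]

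I expect the main obstacle to be the combinatorial bijection of the last step, and recognizing that it is precisely what the chamber count produces; the first two steps are routine, although the realizability criterion does require one to record that, for generic $h$, the obstacle points have pairwise distinct $f$- and $g$-coordinates.
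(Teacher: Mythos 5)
Your proposal is correct, and it shares the paper's skeleton while carrying out the enumeration by a genuinely different mechanism. The paper also linearizes the space of curves (it rotates by $45^\circ$ and models increasing curves as strictly $1$-Lipschitz functions of the natural height, rather than as graphs $y=\phi(x)$), and it simply asserts that the components of the complement of the obstacle hyperplanes are convex; your sign-vector/chamber argument is in effect the justification of that assertion, so the contractibility halves agree. Where you diverge is the bijection with chains: the paper proceeds geometrically, assigning to each component the ``marker'' consisting of those obstacles at which the pointwise infimum of the component fails to be locally linear (this is automatically a chain), and inverting the assignment by the explicit tent function $x_\gamma(t)=\max_{k\in\gamma}\left(x_k+\epsilon-(1-\epsilon)|t-t_k|\right)$; you instead encode a component by the set $A$ of obstacles the curve passes above, characterize the realizable $A$ as the sets closed under strict southeast domination, and biject those with chains via closure and left-to-right maxima --- a correct records argument (your key observation that an appended index lies strictly below an earlier one, hence is never a record, is exactly what makes $S(\bar T)=T$ and $\overline{S(A)}=A$ work). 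Your route isolates the combinatorics cleanly and makes the realizability criterion explicit, at the cost of the reindexing bookkeeping and of the genericity hypothesis that the obstacles have pairwise distinct $f$- and $g$-coordinates, which you rightly flag; the paper's marker construction is shorter and, importantly, produces a canonical representative datum per component that is reused later to label the vertices of the obstacle cubing. One small point to make explicit: the surrounding text works with increasing curves joining fixed endpoints dominating/dominated by all obstacles, so you should either impose the two endpoint conditions on $\phi$ (your chamber argument and realizability criterion go through verbatim) or remark that your bi-infinite model has the same component structure.
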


\begin{proof}
  By turning the $V$ plane by $45^o$, we can identify the increasing paths with the (strictly) Lipschitz functions with constant $L=1$.
  Consider the decomposition of such functions avoiding the obstacles into open components: they are obviously convex.

  For any such component, the pointwise infimum of all trajectories in it (recall that we identify the trajectories with Lipschitz functions $x:[0,T]\to\Real$) is a Lipschitz function passing through some collection of obstacles. Those of the obstacles where $x_\gamma$ is not locally linear necessarily form a chain, which we will refer to as the {\em marker} of the component. This gives a mapping from the components of the space of obstacle avoiding paths to the chains of obstacles.

  To reverse the correspondence, i.e. to associate to a chain a component, pick a small slack $\epsilon$, and for a chain $\gamma$ consider the function
  \[
x_\gamma(t):=\max_{k\in\gamma} \left(x_k+\epsilon-(1-\epsilon)|t-t_k|\right).
  \]
  One can easily see that for small enough $\epsilon$, this function will be in the component whose marker coincides with the chosen chain. This bijection proves the statement.
  \end{proof}

  This correspondence is illustrated in Figure \ref{fig:curves-obs}.

    \begin{figure}[ht]
      \centering
        \includegraphics[width=.8\textwidth]{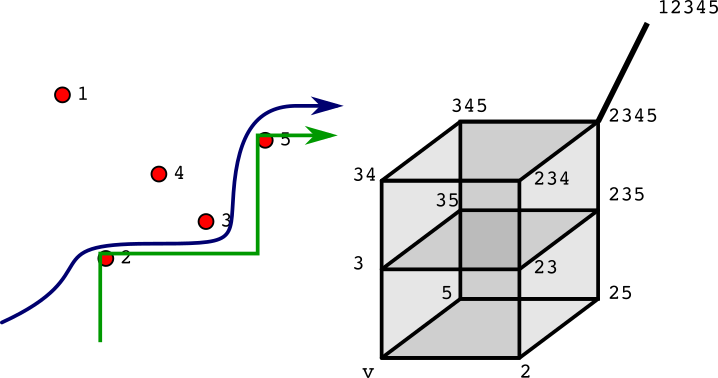}
        \caption{Left display: curves avoiding obstacles and their corresponding chains. The red points are obstacles. The blue curve has the marker shown in green, corresponding to the chain $o_2 \vecl o_5$. \\Right display: cubical complex corresponding to the obstacles on the left display. See Section \ref{sec:cubical} for the explanation of the labels.
        }
        \label{fig:curves-obs}
    \end{figure}

    \subsection{Cubical Complexes}\label{sec:cubical}
    From our structural theorems about the behavior of the persistence diagrams along the increasing paths as the paths cross the obstacles (i.e. cusps or pseudocusps), it is clear that the adjacency of the components of the space of increasing paths is of interest on its own right. Indeed, crossing an obstacle generates or kills a bar on the persistence diagram, and so one can algorithmically generate the persistence diagrams basing on the adjacencies of the components (and the natural heights of the points of the intersection of the increasing curves with the Pareto diagram).

    In the space of increasing curves (which we assume throughout to be parameterized by their natural height, and thus equivalent to the space of $L=1$ Lipschitz functions), the condition of passing through an obstacle forms a codimension $1$ hyperplane.

    The following is quite obvious:
    \begin{prp}
    The multiple intersections of the hyperplanes corresponding to the obstacles are transversal, and contractible.
    \end{prp}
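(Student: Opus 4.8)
The plan is to carry everything out in the linear model from the proof of Proposition~\ref{prp: avoid}. Rotating $V$ by $45^\circ$ and parameterizing by $f+g$ identifies the space of increasing curves (with the prescribed endpoint values) with the set $\cL$ of strictly $1$-Lipschitz functions $x:[0,T]\to\Real$; $\cL$ is a \emph{convex} subset of the affine space of continuous functions with those endpoint values, and is open in the $C^1$ topology. Under this identification an obstacle $o_k$ becomes a point $(t_k,x_k)$, and the hyperplane $H_k$ of curves passing through $o_k$ is $H_k=\{x\in\cL:\ x(t_k)=x_k\}$, i.e. the intersection of $\cL$ with the affine hyperplane defined by the point-evaluation functional $\delta_{t_k}$.

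First I would observe that a multiple intersection is
\[
H_{k_1}\cap\cdots\cap H_{k_l}=\big\{x\in\cL:\ x(t_{k_1})=x_{k_1},\ \ldots,\ x(t_{k_l})=x_{k_l}\big\},
\]
the intersection of the convex set $\cL$ with an affine subspace, hence convex, hence contractible provided it is nonempty. Nonemptiness is governed exactly by the chain condition: a ($1$-Lipschitz) function through all the $(t_{k_j},x_{k_j})$ exists iff $|x_{k_i}-x_{k_j}|\le|t_{k_i}-t_{k_j}|$ for every pair, and (translating back through the rotation, exactly as in the computation underlying Proposition~\ref{prp: avoid}) this is precisely the statement that $o_{k_1},\ldots,o_{k_l}$ are pairwise comparable in the product order, i.e. form a chain; when they do, an explicit witness is the piecewise-linear interpolant through the points — or the function $x_\gamma$ of the previous proof for small $\epsilon$. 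Thus the nonempty multiple intersections are exactly those indexed by chains of obstacles, each of them contractible, in harmony with the count in Proposition~\ref{prp: avoid}.

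For the transversality statement, note that along a chain the natural heights are automatically distinct, $t_{k_1}<\cdots<t_{k_l}$, since two distinct comparable points have distinct coordinate sums. The point-evaluation functionals $\delta_{t_{k_1}},\ldots,\delta_{t_{k_l}}$ on $C([0,T])$ at distinct points are linearly independent (test against functions interpolating the values $\delta_{ij}$ at the $t_{k_i}$). Hence the affine hyperplanes $\{x(t_{k_j})=x_{k_j}\}$ are in general position, so their common intersection has codimension exactly $l$; intersecting with the open set $\cL$ preserves this. This is what "the hyperplanes corresponding to the obstacles meet transversally" means here, and with the contractibility above the proposition is proved.

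The only step that needs genuine care is the bookkeeping around the word ``strictly'': one must check that $\cL$ is open (in the relevant topology) inside the affine subspace it lives in, so that convexity yields contractibility with no boundary subtleties, and that the strict-Lipschitz interpolant used to witness nonemptiness can be chosen with all slopes strictly inside $(-1,1)$ — both of which are elementary but should be stated. Everything else is plain convex geometry.
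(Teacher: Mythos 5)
Your proof is correct and takes essentially the same route as the paper: both work in the rotated model where increasing curves are (strictly) $1$-Lipschitz functions of the natural height, obtain contractibility from convexity, and get transversality from the independence of the point-evaluation constraints at the (distinct) obstacle heights, which the paper simply declares immediate from the definition. The only cosmetic difference is that the paper states convexity for the components of the set of curves passing through a given collection of obstacles \emph{and avoiding all others} (the open strata relevant to the dual cubical complex), whereas you treat the full hyperplane intersections and add the nonemptiness-iff-chain bookkeeping; this is an elaboration of the same convexity argument rather than a different approach.
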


    \begin{proof}
      The transversality follows immediately from the definition; the contractibility from the fact that each component of the set of the increasing curves passing through a given collection of obstacles and avoiding any other obstacle, is convex, if interpreted as functions of the natural height.
    \end{proof}
    
    Consider the cellular complex dual to the space the increasing curves, stratified by the obstacles they pass through. Each of the components becomes a vertex, adjacent cells become connected by an edge etc. The transversality of the intersection implies that the resulting cellular complex is {\em cubical}, that is obtained by the dimension preserving identifications of cubical facets of various cubes.

    \begin{dfn}
We will refer to the complex dual to the stratification of the space of increasing curves by the obstacles they pass through as the {\em obstacle complex}.
      \end{dfn}

    A class of cubical complexes found an extensive use in geometric group theory: namely, the complexes of non-positive curvature (one can turn cubical complexes into path metric spaces by considering flat metric on each of them, in which they are rectangular parallelepipeds (still to be referred to as cubes) whose identified sides have equal lengths). One refers to such metric spaces as CAT($0$) ones, if they satisfy the classical comparison bounds, see \cite{bridson2013metric}. It is well known that a cubical space has non-positive curvature if Gromov's condition is satisfied: in the link of any cube, any three adjacent cubes each sharing a facet will be facets of a common cube, - in other words, nonpositive curvature of a cubical complex depends only on the combinatorial data. A simply-connected cubical complex of nonpositive curvature is referred to as a {\em cubing} (for the details and origins of the nomenclature, see \cite{sageev_ends_1995}).

    \begin{prp} The obstacle complex is a cubing.
      \end{prp}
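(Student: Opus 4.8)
The plan is to invoke Gromov's combinatorial criterion recalled above: a cube complex is a cubing exactly when it is simply connected and the link of every cube is a flag simplicial complex. That the obstacle complex is a cube complex, and for compact $M$ a finite one, has already been established from the transversality of the obstacle hyperplanes, so two things remain to be checked, and the work is concentrated in the second.

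For simple connectedness I would argue contractibility directly. After the $45^{\circ}$ rotation of $V$, the space of increasing curves with fixed endpoints becomes an affine-convex family of strictly $1$-Lipschitz functions of the natural height with prescribed boundary values, hence contractible; the obstacle complex is the complex dual to the stratification of this space by the finite set of obstacles a curve passes through, and every stratum together with every nonempty incidence among closed strata is convex — these are precisely the sets shown to be convex in the two propositions immediately preceding. A stratification into contractible pieces with contractible incidences has dual complex homotopy equivalent to the ambient space, so the obstacle complex is contractible, in particular simply connected. This is the step where I would be slightly careful in citing the appropriate nerve/duality statement in the infinite-dimensional setting, but the only input it needs is the convexity supplied by the earlier propositions.

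The substantive step is the link condition, and the key point is a Helly-type fact about the obstacle poset. A cube $\tau$ is dual to a stratum $S_\tau$ of curves passing through exactly a chain $D$ of obstacles (and no others); the vertices of $\mathrm{lk}(\tau)$ correspond to obstacles $o\notin D$ such that $D\cup\{o\}$ is still a chain and the side of the hyperplane of $o$ facing $S_\tau$ borders $S_\tau$, and two such vertices $o_i,o_j$ are joined by an edge exactly when there is a common $(\dim\tau+2)$-cube, i.e. a nonempty stratum of curves through $D\cup\{o_i,o_j\}$ with $S_\tau$ in its closure — and an increasing curve meets a set of obstacles only if they are totally ordered, so this forces $D\cup\{o_i,o_j\}$ to be a chain. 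Hence if $o_1,\dots,o_r\notin D$ are pairwise joined in $\mathrm{lk}(\tau)$, then every pair in $D\cup\{o_1,\dots,o_r\}$ is comparable in the sense that both coordinates strictly increase, so this finite set is totally ordered, i.e. a single chain. A chain of points in the plane is visited in order by an increasing curve (join consecutive points by segments — comparability keeps the slopes admissible — and extend to the prescribed endpoints), so the stratum of curves through all of $D\cup\{o_1,\dots,o_r\}$ is nonempty, and the same independence-of-branch-choices argument used in the pairwise case shows it borders $S_\tau$. Its dual cube contains $\tau$ and all the edges to the $o_i$, so the $o_i$ span a simplex of $\mathrm{lk}(\tau)$: every link is flag, which, with simple connectedness, gives the cubing property via \cite{sageev_ends_1995,bridson2013metric}.

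I expect the main obstacle to be formalizing the local product structure underlying the phrases ``the side facing $S_\tau$ borders $S_\tau$'' and ``independence of branch choices'': one must check that near a curve in $S_\tau$ nearly touching several mutually comparable obstacles, the corresponding transverse hyperplanes cut out all $2^{\#}$ local chambers, so that pairwise adjacency of the edges in a vertex link genuinely upgrades to a common cube once the combinatorial (chain) obstruction has been removed. Everything else is either the convexity already in hand or the elementary observations that a pairwise-comparable finite set is a chain and that a chain of plane points lies on a common increasing curve.
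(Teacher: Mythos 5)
Your proposal is correct and rests on exactly the same core argument as the paper: Gromov's link condition reduces to the observation that obstacles pairwise joined by increasing curves are pairwise comparable, hence form a chain, and any chain can be threaded by a single increasing curve (the paper phrases this as ordering the obstacles by natural height), with transversality of the obstacle hyperplanes supplying the local cube structure. Your additional elaborations — the nerve-type contractibility argument for simple connectedness and checking flagness for cliques of all sizes rather than just triples — are sound fillings-in of steps the paper treats as immediate, not a different route.
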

    \begin{proof}
The only non-immediate fact requiring verification is Gromov's condition, which amounts to the following statement: if for some three obstacles on the plane, there are increasing paths going through each pair of them, then there is a path going through all three. Ordering the obstacles by their natural height makes that obvious. 
      \end{proof}
    
    The cubes of the obstacle complex, as established, correspond to the chains of obstacles, and the dimension of the cubing, - i.e. the highest dimension of the constituent cubes, - equals the length of the longest chain of obstacles. Right display of the Figure \ref{fig:curves-obs} shows the cubing corresponding to the obstacle configuration on the left. We use the convention of \cite{ardila} to mark vertices of the cubing (i.e. increasing obstacle avoiding paths); as one can see (and prove with ease), the markings to markers as defined in Proposition \ref{prp: avoid}.

The configuration of obstacles equips the rectangular parallelepipeds (``cubes'') of a cubing with natural edge lengths (the smallest of the increments of coordinates between two ordered  obstacles in the corresponding chain chain). Continuous deformations of the configurations of obstacles leads to continuous deformation of thus metrized cubing (the distance between which is defined using the Gromov-Hausdorff metric). It seems quite plausible that the function associating such a metric cubings to a smooth map from a manifold to the plane is continuous, providing a version of biparametric stability.

\section{Concluding Discussion}
This note represents just an introduction to the notion of biparametric persistence via increasing paths. We plan to return to the topic addressing some of the results conjectured here (such as stability), and generalizations (to the manifolds with boundary or corners).

      \bibliographystyle{plain}
      \bibliography{references}

\end{document}